\newtheorem{theorem}{Theorem}
\newtheorem{proposition}[theorem]{Proposition}
\newtheorem{corollary}[theorem]{Corollary}
\newtheorem{lemma}[theorem]{Lemma}
\newtheorem{observation}[theorem]{Observation}
\newtheorem{definition}{Definition}
\newtheorem{conjecture}{Conjecture}
\newtheorem{question}{Question}
\newcommand{\prob}{\mathbb{P}}
\newcommand{\expect}{\mathbb{E}}
\newcommand{\trange}{t\in\{\lfloor n/2 \rfloor, \lceil n/2 \rceil\}}
\newcommand{\ceil}[1]{\lceil #1 \rceil}
\newcommand{\floor}[1]{\lfloor #1 \rfloor}
\begin{document}
\title{Random Uniform and Pure Random Simplicial Complexes}
\author{Klas Markstr\"om\thanks{klas.markstrom@umu.se} and Trevor Pinto\thanks{t.pinto@hotmail.co.uk}}
\maketitle

\begin{abstract}
	In this paper we introduce a method which allows us to study properties of the random uniform simplicial complex. That is, we assign equal probability to all simplicial complexes with a given number of vertices 
	and then consider properties of a complex under this measure. We are able to determine or present bounds for a number of topological and combinatorial properties. We also study the random pure simplicial complex 
	of dimension $d$, generated by letting any subset of size $d+1$ of a set of $n$ vertices be a facet with probability $p$ and considering the simplicial complex generated by these facets.  We compare the behaviour of 
	these models for suitable values of $d$ and $p$.
	
	Finally we use the equivalence between simplicial complexes and monotone boolean functions to study the behaviour of typical such functions. Specifically we prove that most monotone boolean functions are evasive, hence 
	proving that the well known Evasiveness conjecture is generically true for monotone boolean functions without symmetry assumptions. 
	
\end{abstract}

\section{Introduction}
The Erd\H{o}s-Renyi  random graph $G(n,p)$ has since its introduction been the most studied random model for a combinatorial object. Initially the focus was on the specific case $G(n,\frac{1}{2})$ which assigns equal probability to 
all graphs on $n$ vertices, and the aim was to understand what the typical properties of such graphs are.  More recently the study of random models for simplicial complexes has gathered a lot of attention. This was to a large extent triggered by the introduction of the Linial and Meshulam model in \cite{linialmeshulam}.  This model begins with a complete graph on $n$ vertices and includes facetts of size 3 independently with probability $p$. The model was 
generalised in \cite{meshulamwallach} by starting with a complete $k-1$-skeleton and adding $k$-sets independently with probability $p$.    The behaviour of these models has been studied in a long line of papers, with a particular emphasis on generalisations of connectivity, often in the form of homology  \cite{Ko10,ALLM,AL16,KP16,LP16,HKP} .   These models have also been generalised in the frame-wise uniform model of Brooke-Taylor and Testa \cite{BTT}, where faces of dimension $i$ have a probability $p_i$  of being included if all their subsets have already been included.  A number of properties of the BTT-model have been determined in \cite{MR3509567,MR3661651,MR3604492} for different choices of the values $p_i$.   This model also includes clique complexes of random graphs, a mode which had already been studied in a number of papers,  we refer to \cite{MR3290093} for a good survey. 

All these models are interesting from both combinatorial and topological perspectives,  and their analysis has required the development of new probabilistic tools as well as connections between probabilistic combinatorics and algebraic topology. However one aspect of the original Erd\H{o}s-Renyi   model which they have not covered is the study of typical simplicial complexes. None of the existing models can be made to assign equal probability to all simplicial complexes on a given number of vertices.  So the aim of this 
paper is to provide a way to study the uniform random simplicial complex. i.e.  the model where all complexes on $n$ vertices are given  equal probability.  This model is hard to work with directly but thanks to a structural result by Korshunov \cite{korshunovrussian}  we can restrict  our attention from the set  $\mathcal{M}(n)$  of all simplicial complexes to a smaller set which on one hand contains almost all elements of  $\mathcal{M}(n)$  and on the other hand is more easily analysed.   In this way we can prove that many properties hold with high probability in the uniform measure  $\mathcal{U}(n)$  on $\mathcal{M}(n)$.

We also study the random pure model $\mathcal{RP}(n,t,p)$ in which all the maximal faces are sets of size $t$.  Here a simplicial complex  $\Delta\sim \mathcal{RP}(n,t,p)$ is generated by letting subsets of $[n]$ of size $t$ to be 
facetts of $\Delta$ independently with probability $p$.  Next all non-empty subsets of the facetts are added.  We study the topological properties of this model and compare those with the uniform model. In particular, we find  
that $\mathcal{RP}(n,n/2,1/2)$ is a useful heuristic for $\mathcal{U}(n)$ when $n$ is even.

\subsection{The uniform model}
Our general strategy for handling the uniform random simplicial complex uses two stages. First  we use a structure theorem by Korshunov which demonstrates that there exists a family  $\mathcal{M}'(n)$ of simplicial complexes such that almost all complexes from $\mathcal{M}(n)$  belong to $\mathcal{M}'(n)$,  and $\mathcal{M}'(n)$ in turn has a partition into a collection of classes $\mathcal{M}(n,t, A, B)$, which we will soon 
describe.  Next, in order to analyse if a property $P$ holds in with high probability under the uniform distribution on  $\mathcal{M}(n)$  we prove that the property holds with high probability in $\mathcal{M}(n,t, A, B)$, for all 
relevant $t, A, B$.  In principle one can imagine properties $P$  which will be sensitive to the parameters  $t, A ,B$,  but as we shall see a wide range of  natural properties are not sensitive in thus way and can be treated by our methods.

Khorshunov's theorem in its original form is structural result for monotone boolean functions, which was proven in order to find the asymptotic number of such functions.  However, monotone Boolean functions and simplicial functions are equivalent object  and we here present his result in terms of simplicial complexes.   In order  state  Korshunov's  theorem, we require a bit of terminology from \cite{korshunovrussian}.  

Given  a family of $k$-sets $A$,  we decompose the family into \emph{bundles} by declaring two sets to be in the same bundle if their intersection has size $k-1$, and then making this relation transitive. 
\begin{definition}
	Let $n$ and $t$ be natural numbers with $t\leq n$. We say a pair of set families, $(A, B)$ is an \emph{admissible pair} if all of the following hold:
	\begin{enumerate}
		\item $A\subseteq [n]^{(t-1)}$ and $B\subseteq [n]^{(t+1)}$
		\item $\big||A|-2^n\big|\leq n2^{n/2+1}$.
		\item $\big||B|-2^n\big|\leq n2^{n/2+1}$.
		\item For all $a\in A$ and $b\in B$, we have that $b\nsubseteq a$.
		\item $A$ and $B$ both consist of one and two element bundles, and each have at most $16n^4$ two-element bundles.
	\end{enumerate}
\end{definition}
For admissible pairs $(A,B)$, we write $\mathcal{M}(n,t, A, B)$ for the collection of simplicial complexes on $n$ vertices such that $A$ is the set of facets of size $t-1$, $B$ is the set of facets of size $t+1$ and all the other facets have size $t$. Let also $\mathcal{M}(n,t)$ be the union of $\mathcal{M}(n,t, A, B)$  over all admissible $(A,B)$

The following theorem from \cite{korshunovrussian},  also stated in the more accessible \cite{korshunovsurvey} and expanded upon in \cite{korshunovshmulevich} by Korshunov and Shmulevich, gives us the structure of almost all simplicial complexes.
\begin{theorem}[Korshunov \cite{korshunovrussian}]\label{korshunov}
	{\ }

	If $n$ is even, then \[|\mathcal{M}(n)|=(1+o(1))|\mathcal{M}(n,n/2)|.\]

	If $n$ is odd, then \[|\mathcal{M}(n)|=(1+o(1))\Big(|\mathcal{M}(n,\lfloor n/2\rfloor)|+|\mathcal{M}(n,\lceil n/2 \rceil )|\Big).\]
\end{theorem}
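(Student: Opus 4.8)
The plan is to prove each displayed equality by establishing its ``$\leq$'' direction; the reverse inequalities are immediate from $\mathcal{M}(n,t)\subseteq\mathcal{M}(n)$ (and, for $n$ odd, from the essential disjointness of the two classes that the argument below will supply). For the upper bound, first note that letting $C$ range over all $2^{\binom{n}{\floor{n/2}}}$ subfamilies of the middle layer $[n]^{(\floor{n/2})}$ — each vacuously an antichain, realising a pair $(A,B)$ that the conditions defining admissibility permit — and attaching, for a $(1-o(1))$-proportion of these, a typical collection of ``stray'' facets of the adjacent sizes, shows that
\[
 |\mathcal{M}(n,\floor{n/2})| \;\geq\; 2^{\binom{n}{\floor{n/2}}}\cdot\exp\!\big(\Theta(2^{n/2}/\sqrt{n})\big),
\]
and symmetrically for $\ceil{n/2}$. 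Hence, identifying a simplicial complex with the antichain of its facets in the Boolean lattice $2^{[n]}$, it suffices to show that the number of antichains \emph{not} lying in $\mathcal{M}(n,\floor{n/2})\cup\mathcal{M}(n,\ceil{n/2})$ is $o\big(|\mathcal{M}(n,\floor{n/2})|\big)$.

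\emph{Stage 1: confinement to the central band.} The first step is to show that, up to a negligible fraction, every antichain has no facet of size $\leq\floor{n/2}-2$ and none of size $\geq\ceil{n/2}+2$, i.e. is supported on the $O(1)$ layers nearest $n/2$. The mechanism is a brutal trade-off: a facet $S$ of size $\floor{n/2}-j$ is a maximal face, so \emph{every} superset of $S$ — in particular all $\binom{\ceil{n/2}+j}{j}$ of its supersets in the middle layer — is forced out of the generated up-set, and for $j\geq2$ that is $\Omega(n^{2})$ forbidden middle sets, overwhelming the $\leq n$ bits needed to name $S$. A first-moment estimate over the number of such low facets — with Kruskal--Katona-type shadow lower bounds keeping the count of forbidden middle sets under control when several are present — together with its mirror image for high facets, completes this step; some care is needed because a crude Kleitman-type bound on the total number of antichains is far too lossy at this precision.

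\emph{Stage 2: admissibility inside the band.} Fix the central layer $t\in\{\floor{n/2},\ceil{n/2}\}$ that carries the bulk; an antichain in the band is then uniquely $A\sqcup C\sqcup B$ with $A\subseteq[n]^{(t-1)}$, $C\subseteq[n]^{(t)}$, $B\subseteq[n]^{(t+1)}$, and being an antichain amounts \emph{exactly} to: $C$ avoids every superset of a member of $A$ and every subset of a member of $B$, and no member of $B$ contains a member of $A$. So the number of such complexes is $\sum_{(A,B)}\#\{C\text{ valid}\}$, whose restriction to admissible pairs is $|\mathcal{M}(n,t)|$ by definition, and it remains to bound the contribution of the non-admissible pairs. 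Here the key estimate is that each member of $A$ forces its $\sim n/2$ middle supersets out of $C$, and — a size-$(t+1)$ facet being maximal, so none of its size-$t$ subsets can be a facet — each member of $B$ forces its $\sim n/2$ middle subsets out of $C$, with overlaps confined within a bundle (facets in distinct bundles forbid disjoint sets of middle sets); thus $|A|=m$ leaves at most $2^{\binom{n}{t}-\Omega(mn)}$ valid $C$, against at most $\binom{n}{t-1}^{m}$ ways to choose $A$. A first-moment sum then crushes the antichains with $|A|$ (or $|B|$) beyond its admissible window, since that window lies polynomially above the typical value $\sim2^{n/2}/\sqrt{n}$ and a Poisson-type tail beyond a mean-dominating threshold decays fast enough; the same principle rules out bundles of size $\geq3$ and excesses of two-element bundles, each additional bundle costing far more in forbidden middle sets than it gains in placement freedom. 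For $n$ odd one also invokes Kruskal--Katona: an antichain with $\Omega(\binom{n}{\floor{n/2}})$ facets in \emph{each} middle layer would confine one layer's facets to the small complement of a large shadow of the other's, so such ``doubly supported'' antichains are negligible and $\mathcal{M}(n,\floor{n/2})$, $\mathcal{M}(n,\ceil{n/2})$ are, up to a negligible error, disjoint and together exhaustive.

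I expect the real difficulty to be quantitative rather than structural. Since $|\mathcal{M}(n,t)|$ itself carries the \emph{superpolynomial} correction factor $\exp(\Theta(2^{n/2}/\sqrt{n}))$ — and so, essentially, do the counts of the exceptional antichains, which enjoy most of the same ``stray facet'' freedom — the Stage 2 error terms cannot merely be $2^{o(\binom{n}{t})}$: they must be superpolynomially small \emph{relative to that common factor}. This is exactly why the admissibility thresholds (the $n2^{n/2+1}$ size windows, the $16n^{4}$ bundle cap) are pegged polynomially above the typical values, and why the tail estimates have to be genuinely tight rather than merely qualitative; the analogous layer-coupling in Stage 1 is delicate for the same reason. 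This bookkeeping is the technical heart of Korshunov's analysis in \cite{korshunovrussian}; a modern treatment would likely replace his bespoke injections by a Sapozhenko-style container/transfer argument for enumerating near-extremal antichains, with Kruskal--Katona controlling the shadows of the bundle families throughout, while Stage 1 and the final assembly remain a lengthy but essentially routine first-moment computation.
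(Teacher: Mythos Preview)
The paper does not prove this theorem. Theorem~\ref{korshunov} is quoted from Korshunov's work \cite{korshunovrussian} (with pointers to \cite{korshunovsurvey,korshunovshmulevich}) and is used as a black box throughout; there is no proof in the paper to compare your proposal against.

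As a sketch of Korshunov's argument, your two-stage outline---first confine antichains to the $O(1)$ layers nearest $n/2$ via the ``a stray low/high facet kills $\Omega(n^2)$ middle sets'' trade-off, then control the size and bundle structure of $A$ and $B$ inside the band by the analogous ``each element of $A$ or $B$ kills $\sim n/2$ free middle sets'' trade-off---is the right shape, and your closing paragraph correctly identifies where the work actually lies: the error terms must beat the $\exp(\Theta(2^{n/2}/\sqrt n))$ correction factor, which is why the admissibility windows sit polynomially above the typical values. But what you have written is explicitly a plan, not a proof; every quantitative estimate is deferred, and you yourself point to \cite{korshunovrussian} for ``the technical heart''. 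So this cannot stand as a proof of the statement.

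One small caution: your lower bound $|\mathcal{M}(n,\lfloor n/2\rfloor)|\ge 2^{\binom{n}{\lfloor n/2\rfloor}}$ via the empty pair $(A,B)=(\emptyset,\emptyset)$ is not compatible with the admissibility conditions as literally stated in this paper, which require $\big||A|-2^n\big|\le n2^{n/2+1}$ and likewise for $B$; under that reading the empty pair is inadmissible. This is almost certainly a typo in the paper (the intended bound should be on $|A|$ alone, of order $2^{n/2}$), and your argument is fine under the intended reading, but it is worth flagging.
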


We can now define the auxiliary random models which will help us in analysing    $\mathcal{U}(n)$.
\begin{definition} 
	For given  $t, A, B$  we define $\mathcal{U}(n,t, A, B)$  to be the uniform distribution on elements of $\mathcal{M}(n,t,A,B)$.
	Similarly we  define $\mathcal{U}(n,t)$ to be the uniform distribution on $\mathcal{M}(n,t)$ 
\end{definition} 
An element of $\mathcal{M}(n,t,A,B)$ is determined by the fixed elements from $A$ and $B$, and its facets of size $t$. Let $F$ be the collection of all $t$-sets which contain no set $a\in A$ and are not contained in any $b\in B$. We call such sets  \emph{free sets}. By the 
bounds on $A$ and $B$, we have that there are $(1+o(1))\binom{n}{t}$ free sets. Picking a uniformly random element, $\Delta$, of $\mathcal{M}(n,t,A,B)$ is equivalent to picking free sets to be faces of the complex 
independently with probability 1/2. This typically makes the distributions $\mathcal{U}(n,t, A, B)$ much easier to work with than working with $\mathcal{U}(n)$ directly, and if a property holds with high probability in $\mathcal{U}(n,t, A, B)$ for all admissible $(A,B)$ then it also holds with high probability  in $\mathcal{U}(n)$. 

As one may expect, given the form of Theorem \ref{korshunov}, some of the behaviour of $\mathcal{U}(n)$ depends strongly on the parity of $n$.  In order to get cleaner statements of our theorems we mainly work with even $n$ 
and then indicate how the corresponding result for odd $n$ can be derived.

\subsection{Overview of the paper}
The remainder of this paper is structured as follows. 

In Section \ref{sec:note} we define some of our  topological terminology  and other convenient notation.

Section \ref{sec:skeleton} we describe the skeleta of complexes from our models. We show that with high probability, $\Delta\sim \mathcal{U}(n)$ has a complete $(n/2-2)$-skeleton, when $n$ is even. We also find ranges of $p$ that guarantee, with high probability, complete $t'$-skeletons for $\Delta\sim \mathcal{RP}(n,t,p)$, when $t'$ is close to $t$.  These results both relate our models to e.g. the Linial-Meshulam model  and are also used in some of the later 
results.

In Section \ref{sec:hom}, we investigate the homology group in both models. The methods we use illustrate the similarity of working with $\mathcal{U}(n)$ and $\mathcal{RP}(n,t,p)$, particularly in the range of large $n$. We 
 provide lower bounds on the dimension of the homology group which  we conjecture are also best possible. 

In Section \ref{sec:Euler}, we discuss the Euler characteristic of $\mathcal{U}(n)$. We bound this in absolute value by $cn2^{n/2}$, for some constant $c$ and show that this  bound  is best possible up to some polynomial factor.

In Section \ref{sec:subcomp}, we  investigate which pure simplicial complexes appear as induced subcomplexes in $\Delta \sim \mathcal{U}(n)$ with high probability.

In Section \ref{sec:shell}, we look at the related notions of shellability and the $h$-vector. These properties  provide an instance where the random pure model behaves very differently to the uniform model.

Finally,  in Section \ref{evasivenesssec} we apply our work on $\mathcal{U}(n)$,  to  evasiveness of Boolean functions. We use a special case of our result on the homology of $\mathcal{U}(n)$ to prove that almost all 
monotone Boolean functions are evasive.

\section{Notation}\label{sec:note}

\subsection{Probability}
For general probabilistic facts we refer the reader to \cite{alonspencer}. 
We will make repeated use of the entropy function, often denoted by $H(c)$, and in order to avoid confusion with the terminology  from algebraic topology we instead write  $\alpha(c)=-c\log(c) -(1-c)\log(1-c)$. 

\begin{proposition}\label{constcbinom}
If $c$ is a constant and $0<c<1$, then, as $n$ tends to infinity,
$$\binom{n}{cn}= (1+o(1) ) \exp( \alpha(c)n  -1/2 \log(2 (1 - c) c n \pi)   )$$
\end{proposition}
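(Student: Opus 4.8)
The plan is to derive this directly from Stirling's formula in its precise form $m! = (1+o(1))\sqrt{2\pi m}\,(m/e)^m$, applied to each of the three factorials in $\binom{n}{cn} = \frac{n!}{(cn)!\,((1-c)n)!}$. Here I am treating $cn$ as an integer (or, more carefully, as $\lfloor cn\rfloor$; since $c$ is a fixed constant strictly between $0$ and $1$, the rounding affects the estimate only by a $1+o(1)$ factor, which one can check separately by a short monotonicity argument on ratios of consecutive binomial coefficients).

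First I would substitute the three Stirling approximations and collect terms. The exponential-in-$n$ part comes out as $\frac{n^n}{(cn)^{cn}((1-c)n)^{(1-c)n}}$; taking logarithms, the powers of $n$ cancel (since $cn + (1-c)n = n$) and one is left with $-cn\log c - (1-c)n\log(1-c) = \alpha(c)n$, so this contributes $\exp(\alpha(c)n)$. The factors of $e$ cancel identically. The remaining ingredient is the ratio of square-root factors, $\frac{\sqrt{2\pi n}}{\sqrt{2\pi cn}\,\sqrt{2\pi (1-c)n}} = \frac{1}{\sqrt{2\pi c(1-c)n}}$, which I would rewrite as $\exp\!\left(-\tfrac12\log\bigl(2\pi c(1-c)n\bigr)\right)$. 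Combining the two pieces gives exactly the claimed expression $\exp\bigl(\alpha(c)n - \tfrac12\log(2(1-c)cn\pi)\bigr)$, up to the $1+o(1)$ factor inherited from the three applications of Stirling.

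There is no real obstacle here; the only point requiring a little care is the passage from $cn$ to $\lfloor cn\rfloor$ when $cn$ is not an integer. For that I would note $\binom{n}{\lfloor cn\rfloor}/\binom{n}{cn}$ differs from $1$ by a bounded number of factors each of the form $\frac{n-k}{k+1}$ with $k = cn + O(1)$, hence each equal to $\frac{1-c}{c}(1+o(1))$, and since there are at most one such factor (the difference $|cn - \lfloor cn\rfloor| < 1$) the correction is $O(1)$ — which is not good enough for a $1+o(1)$ statement, so in fact the cleanest route is to simply interpret the right-hand side as already being an asymptotic formula valid for the integer $\lfloor cn\rfloor$, noting that $\alpha(\lfloor cn\rfloor/n) = \alpha(c) + o(1/n) \cdot n$...

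Actually, let me reconsider: the genuinely clean approach is to \emph{define} the statement with $cn$ read as $\lfloor cn \rfloor$ throughout and apply Stirling to $\lfloor cn\rfloor!$ and $(n-\lfloor cn\rfloor)!$ directly; then $\lfloor cn\rfloor = cn + O(1)$ and one checks that replacing $\lfloor cn\rfloor$ by $cn$ inside $\alpha(\cdot)n = -\lfloor cn\rfloor \log(\lfloor cn\rfloor/n) - \cdots$ changes the exponent by $o(1)$ (the derivative of $x\mapsto -x\log(x/n)$ is $O(\log n)$ near $x = cn$, times an $O(1)$ shift, and the logarithmic terms telescope to leave an $o(1)$ error after a second-order expansion). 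That absorbs into the $1+o(1)$. I expect this rounding bookkeeping to be the only step needing attention; everything else is a mechanical substitution and simplification.
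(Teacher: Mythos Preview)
The paper states this proposition without proof, treating it as a standard consequence of Stirling's formula; your derivation via Stirling applied to the three factorials is exactly the standard argument and is correct when $cn$ is an integer.

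Your instinct that the rounding from $cn$ to $\lfloor cn\rfloor$ deserves care is right, but your final resolution is slightly off: the derivative of $x\mapsto -x\log(x/n)$ at $x=cn$ is $-\log c - 1 = O(1)$, not $O(\log n)$, so an $O(1)$ shift in $x$ produces an $O(1)$ change in the exponent rather than $o(1)$. Carrying out the expansion carefully gives
\[
\binom{n}{\lfloor cn\rfloor} = (1+o(1))\,\Bigl(\tfrac{c}{1-c}\Bigr)^{\{cn\}}\exp\Bigl(\alpha(c)n - \tfrac12\log\bigl(2\pi c(1-c)n\bigr)\Bigr),
\]
where $\{cn\}$ denotes the fractional part; the factor $(c/(1-c))^{\{cn\}}$ is bounded but does not tend to $1$ unless $c=1/2$ or $cn\in\mathbb{Z}$. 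So the $(1+o(1))$ form as literally written really is meant for integer $cn$. This is harmless for the paper: every application of the proposition (Theorems~\ref{skeletonpure} and~\ref{pure shellable}, and the homology estimates) only extracts the cruder statement $\binom{n}{cn}=\exp\bigl(\alpha(c)n + O(\log n)\bigr)$, into which the bounded correction is absorbed.
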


\begin{lemma}\label{covariance}
	Suppose $X_1, \dots, X_m$ are indicator random variables. Then \[Var(X)\leq \expect(X) +\sum_{i\neq j} Cov(X_i,X_j).\]
\end{lemma}

\subsection{Simplicial complexes}

We introduce here the terminology that we use in the remainder of the paper.

A \emph{simplicial complex on $n$ vertices}, $\Delta$, is a collection of non-empty subsets of $[n]:=\{1,\dots n\}$ such that if $x\in \Delta$ and $\emptyset\neq y\subseteq x$ then $y\in \Delta$.

We refer to the elements of $\Delta$ as \emph{faces}. A \emph{facet} is a maximal face, that is, a face that is not contained in another face of the simplicial complex. The \emph{dimension} of a face $x$ is  $|x|-1$. In other words it is one less than the size of $x$ as a subset of $[n]$. Although in the literature dimension is used more frequently than size, for our purposes it is more convenient to work with the size of faces.

The \emph{link}  of a face  $x\in \Delta$ is the set $Lk(x)=\{ y | x\cup y \in \Delta,  x \cap y =\emptyset    \}$

We write  $[n]^{(k)}$ for the collection of size $k$ subsets of $[n]$. Given a simplex $\Delta$, its \emph{$k$-skeleton} is the complex formed by the collection of faces of $\Delta$ of size at most $k$. We say $\Delta$ has a 
complete $k$-skeleton if the $k$-skeleton is  $\cup_{j\leq t}[n]^{(j)}$.

\subsection{Algebraic topology}
We cannot hope to give an in depth review of algebraic topology, so the purpose here is instead to establish the terminology we will use.    We refer the reader to  the texbooks by Hatcher \cite{Ha02} and Edelsberunner  \cite{EdHa} for in depth discussion of algebraic topology, with a more classical point of view in the first and a more discrete and computational emphasis in the second. 

A $k$-\emph{chain}  is a formal linear combination $c=\sum_i c_i x_i$, where $x_i$ are $k$-simplices from $\Delta$, and the constant $c_i$ come from some ring $R$.     The set of all $k$-chains from $\Delta$ defines a group $C_k(\Delta,R)$ under addition.

The boundary operator $\partial_k$ maps a $k$-simplex  $x$ to the chain $\sum_i y_i $, where the $y_i$ are all the $(k-1)$-simplices contained in $x$. For a $k$-chain  the boundary is defined  as $\partial_k(c)= \sum_i c_i \partial_k(x_i)$. 
Under this definition the boundary operator is a linear operator $\partial_k C_k(\Delta,R) \rightarrow C_{k-1}(\Delta,R)$

The set $Z_k(\Delta,R)= \{ x\in C_k(\Delta,R) | \partial_k(x)=0  \}  $ is called the space of $k$-cycles,  and $B_k(\Delta,R)= \{ x\in C_{k}(\Delta,R) | \exists y\in C_{k+1}(\Delta,R),  x=\partial_{k+1}(y)   \}  $ is called the set of $k$-boundaries.

The $k$th homology group of $\Delta$ over $R$ is defined as $$H_k(\Delta,R)=Z_k(\Delta,R)/B_k(\Delta,R).$$
When $\Delta$ is clear from the context we will sometimes simply write $H_k$.
Sometimes it is also convenient to define the reduced homology groups, denoted $\bar{H}_k(\Delta,R)$.  Here $\bar{H}_k(\Delta,R)=H_k(\Delta,R)$ for $k>0$ and $H_0(\Delta,R)=\bar{H}_0(\Delta,R)\oplus \mathbb{Z}$.

The Euler Characteristic  of $\Delta$ can be defined in several equivalent ways:  
$$\chi(\Delta)=\sum_i (-1)^i f_i=  \sum_{r=0}^n \beta_r(\Delta)  $$  
where $f_i$ denotes the number of $i$-dimensional faces of $\Delta$, and the Betti number $\beta_r$ is given by the rank of $H_r(\Delta,R)$.

\section{The $k$-skeleton}\label{sec:skeleton}
Our first aim is to investigate the skeleta of complexes from $\mathcal{U}(n)$  and $\mathcal{RP}(n,t,p)$.  In particular we  will determine which of the skeleta are complete  in either of the two models. 

We start by considering the uniform random complex.  Korshunov's structure theorem \ref{korshunov} leds to a quick proof of the following.
\begin{theorem}\label{skeletonuniform}
	Suppose $\Delta\sim \mathcal{U}(n),$ where $n$ is even. Then with high probability, $\Delta$ has a complete $(n/2-2)$-skeleton. 
\end{theorem}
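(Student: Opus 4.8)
The plan is to reduce to the auxiliary models $\mathcal{U}(n,n/2,A,B)$ and then run a union‑bound argument inside them. By Theorem~\ref{korshunov} (for $n$ even, $|\mathcal{M}(n)|=(1+o(1))|\mathcal{M}(n,n/2)|$) together with the reduction principle stated just after the definition of $\mathcal{U}(n,t,A,B)$, it suffices to fix an arbitrary admissible pair $(A,B)$ with $t=n/2$ and show that $\Delta\sim\mathcal{U}(n,n/2,A,B)$ has a complete $(n/2-2)$-skeleton with high probability. Recall that in this model each free $n/2$-set is a facet of $\Delta$ independently with probability $1/2$, and that there are $(1+o(1))\binom{n}{n/2}$ free sets.

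Since every facet of a complex in $\mathcal{M}(n,n/2,A,B)$ has size $n/2-1$, $n/2$ or $n/2+1$, a set $\sigma$ with $|\sigma|\le n/2-2$ belongs to $\Delta$ if and only if it is contained in some $a\in A$, some $b\in B$, or some chosen free $n/2$-set. If $\sigma$ is contained in an element of $A$ or $B$ there is nothing to prove, so assume it is not. Then $\sigma\in\Delta$ iff at least one of the $N(\sigma)$ free $n/2$-sets containing $\sigma$ is selected, which fails with probability exactly $2^{-N(\sigma)}$. As there are at most $2^n$ sets of size at most $n/2-2$, it is enough to show $N(\sigma)=\omega(n)$ uniformly over all such $\sigma$; a union bound then finishes the proof.

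To bound $N(\sigma)$ from below, first note that the number of $n/2$-sets containing a set $\sigma$ of size $k\le n/2-2$ is $\binom{n-k}{n/2-k}$, which is strictly decreasing in $k$ on this range and hence at least $\binom{n/2+2}{2}=\Theta(n^2)$. It remains to control how many of these $n/2$-sets fail to be free. A short case check shows: no $n/2$-set containing $\sigma$ can be contained in any $b\in B$ (that would force $\sigma\subseteq b$); and if an $n/2$-set $\tau\supseteq\sigma$ contains some $a\in A$, then, since $|a|=|\tau|-1$ and $\sigma\not\subseteq a$, we must have $|a\cap\sigma|=k-1$ and $\tau=a\cup\sigma$. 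Thus the non-free $n/2$-sets containing $\sigma$ are governed by the members of $A$ meeting $\sigma$ in exactly $k-1$ points, and one shows using the admissibility conditions — the size bounds on $A$ and $B$, and crucially the fact that $A$ is a union of one- and two-element bundles — that there are only $o(n^2)$ of these. Consequently $N(\sigma)=(1-o(1))\binom{n-k}{n/2-k}\to\infty$; for $k<n/2-2$ the count $\binom{n-k}{n/2-k}$ is much larger and the same bookkeeping gives an even more comfortable margin.

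I expect the main obstacle to be exactly this last estimate for $\sigma$ of size close to $n/2-2$: there the number of candidate $n/2$-sets is only $\Theta(n^2)$, so one cannot afford to lose more than a constant fraction of them to non-freeness, and one must genuinely exploit the structural restrictions on $A$ and $B$ (a priori $A$ could be concentrated around a single small set) rather than only their cardinalities. One should also confirm that the reduction via Theorem~\ref{korshunov} is legitimate here, i.e.\ that completeness of the $(n/2-2)$-skeleton is insensitive to the choice of admissible $(A,B)$. Granting the freeness estimate, the remainder is routine: elementary binomial bounds and a union bound over the $\le 2^n$ small sets.
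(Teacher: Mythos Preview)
Your reduction to the auxiliary models is a natural first instinct, but the paper does something entirely different and much shorter: it exploits the complementation symmetry of $\mathcal{U}(n)$. Define $\Delta^c$ by $x\notin\Delta\Leftrightarrow [n]\setminus x\in\Delta^c$; then $\Delta^c\sim\mathcal{U}(n)$ as well, so by Theorem~\ref{korshunov} $\Delta^c$ has no face of size $n/2+2$ with high probability, which is exactly the statement that every $(n/2-2)$-set lies in $\Delta$. This two-line argument never fixes an admissible pair and thus avoids the obstacle you flag.

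That obstacle is genuine, and in fact the claim you need is false. Your assertion that the bundle structure forces only $o(n^2)$ elements of $A$ to meet a given $(n/2{-}2)$-set $\sigma$ in exactly $n/2-3$ points does not follow from admissibility. Take $\sigma=\{1,\dots,n/2-2\}$, pick $n/2-2$ edge-disjoint perfect matchings $M_1,\dots,M_{n/2-2}$ on $[n]\setminus\sigma$ (which has $n/2+2$ vertices, so these exist), and place $a_{i,e}:=(\sigma\setminus\{i\})\cup e$ into $A$ for every $i\in\sigma$ and $e\in M_i$. Two such sets share an $(n/2-2)$-subset only if either they use the same $i$ and edges meeting in one vertex (excluded: $M_i$ is a matching) or different $i$'s and the same edge (excluded: the $M_i$ are edge-disjoint); hence all $\approx n^2/8$ of them sit in singleton bundles, and the rest of $A$ and all of $B$ can be filled in away from $\sigma$ without difficulty. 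Each $a_{i,e}$ kills exactly one of the $\binom{n/2+2}{2}\approx n^2/8$ candidate $n/2$-supersets of $\sigma$, leaving $N(\sigma)=\Theta(n)$ rather than $\Theta(n^2)$. So the uniform bound $N(\sigma)=\omega(n)$ that your union bound requires is unattainable for adversarial admissible $(A,B)$, and the ``routine'' finish does not go through. This is precisely the kind of sensitivity to $(A,B)$ the paper warns about; the complementation trick bypasses it entirely.
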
 
\begin{proof}
	Given $\Delta \sim \mathcal{U}(n)$, we define $\Delta^c$ by the relation $x\notin \Delta \Rightarrow [n]\setminus x \in \Delta^c$. Note that $\Delta^c\sim \mathcal{U}(n)$ and hence, by Theorem \ref{korshunov}, it contains no 
	faces of size $n/2+2$ with high probability. Thus $\Delta$ has a complete $n/2-2$-skeleton with high probability.
\end{proof}
Note that we may use a similar method to show that when $n$ is odd, if $\Delta\sim \mathcal{U}(n,\ceil{n/2})$, then with high probability, $\Delta$ has a complete $(\ceil{n/2}-2)$-skeleton. Likewise, if $\Delta\sim \mathcal{U}(n,\floor{n/2}),$ then with high probability, $\Delta$ has a complete $(\floor{n/2}-2)$-skeleton.

For the random pure simplicial complex $\mathcal{RP}(n,t,p)$ we will consider two different ranges for $t$,  $t$ linear in $n$ and $t$ constant.   For both cases we may note that having a complete skeleton is a monotone property  under addition of new faces $t$-faces as well as under increases in $p$.   We will now prove that the property of having a  complete skeleton has a sharp threshold, and locate that threshold to leading order.  

For both this and several other results in this paper one could also develop a more refined stopping time version of the result. That is, we could consider a random process where at each time step a randomly chosen facet is added until the desired property is satisfied. This leads to a structured coupon collector problem \cite{FLM}  and e.g. the corresponding result for the existence of a perfect matching are among the classical results on random graphs.

\begin{theorem}\label{skeletonpure}
	Let  $t=cn$, where $0<c<1$ is a constant, $t'=t-k$ for some constant $k$, and  let $q(c,k)>0$  be a a large enough constant, Then, for  $\Delta\sim \mathcal{RP}(n,t,p)$:
	\begin{enumerate}
		\item For $k>1$, let $C=\frac{\alpha(c)k!}{(1-c)^k }$. 
		
		If $p=p_1:=\frac{1}{n^{k-1}}(C +  q (1-c)^{-k} k! \frac{\log(n)}{n})$,  then  $\Delta$ has a complete $t'$-skeleton with high probability. 
		
		If instead $p=p_2:=\frac{1}{n^{k-1}}(C -  q (1-c)^{-k} k! \frac{\log(n)}{n})$, then $\Delta$ with high probability does not have a complete $t'$-skeleton.

		\item For $k=1$. If $p=p_3:=1-\exp\left(\frac{-\alpha(c)}{1-c}-\frac{q \log(n)}{(1-c)n }\right)$ then  $\Delta$ has a complete $t'$-skeleton with high probability. 
		
		If instead $p=p_4:=1-\exp\left(\frac{-\alpha(c)}{1-c}+\frac{q \log(n)}{(1-c)n }\right)$, 	then $\Delta$ with high probability does not have a complete $t'$-skeleton.
	\end{enumerate}
\end{theorem}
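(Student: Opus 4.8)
The plan is to analyze, for a $t'$-set $S \in [n]^{(t')}$, the probability that $S$ fails to be a face of $\Delta \sim \mathcal{RP}(n,t,p)$, and then apply a first/second moment argument to the number of "missing" $t'$-sets. A fixed $t'$-set $S$ is a face of $\Delta$ if and only if at least one of the $t$-sets containing $S$ is selected as a facet. There are exactly $\binom{n-t'}{k}$ such $t$-sets, so
\[
\prob[S \notin \Delta] = (1-p)^{\binom{n-t'}{k}}.
\]
Writing $M = M(n,t')$ for the number of $t'$-sets that are \emph{not} faces of $\Delta$, we have $\expect[M] = \binom{n}{t'}(1-p)^{\binom{n-t'}{k}}$. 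The skeleton is complete iff $M = 0$. So the first step is to evaluate the exponent: using Proposition~\ref{constcbinom}, $\log\binom{n}{t'} = \alpha(c)n + O(\log n)$ (the shift from $t'=cn-k$ to $cn$ only affects lower-order terms since $k$ is constant), and $\binom{n-t'}{k} = \frac{((1-c)n)^k}{k!}(1+o(1))$.

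**The case $k \ge 2$.** Here $p = p_{1,2} \to 0$, so $(1-p)^{\binom{n-t'}{k}} = \exp\!\big(-p\binom{n-t'}{k}(1+o(1))\big) = \exp\!\big(-p\,\frac{((1-c)n)^k}{k!}(1+o(1))\big)$. Plugging in $p = p_1 = \frac{1}{n^{k-1}}\big(C + q(1-c)^{-k}k!\frac{\log n}{n}\big)$ with $C = \frac{\alpha(c)k!}{(1-c)^k}$ gives exponent $-\alpha(c)n - q\log n + o(\log n)$, hence $\expect[M] = \exp\!\big(\alpha(c)n + O(\log n)\big)\cdot\exp\!\big(-\alpha(c)n - q\log n\big) = n^{-q + o(1)} \to 0$ for $q$ large. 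By Markov's inequality, $M = 0$ whp, so the skeleton is complete. For $p = p_2$ the sign flips and $\expect[M] = n^{q+o(1)} \to \infty$; to upgrade this to $M \ge 1$ whp I would apply the second moment method via Lemma~\ref{covariance} with $X_S$ the indicator that $S \notin \Delta$. The covariance $\mathrm{Cov}(X_S, X_{S'})$ is controlled by the number of $t$-sets containing \emph{both} $S$ and $S'$: if $|S \cap S'| = t' - j$ then $S \cup S'$ has size $t' + j$, and the number of common $t$-supersets is $\binom{n - t' - j}{k - j}$ (zero once $j > k$). One shows $\sum_{S \ne S'}\mathrm{Cov}(X_S,X_{S'}) = o(\expect[M]^2)$ because for each overlap type the pair count times the excess correlation factor is lower order — the dominant contribution is from nearly-disjoint pairs, which are essentially independent. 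Then $\mathrm{Var}(M) = o(\expect[M]^2)$ and Chebyshev gives $M \ge 1$ whp.

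**The case $k = 1$.** Now $\binom{n-t'}{1} = n - t' = (1-c)n + O(1)$, which is linear in $n$, so $p$ does \emph{not} tend to $0$ and we must keep $(1-p)$ exact rather than linearizing. With $p = p_3$, $1 - p = \exp\!\big(-\frac{\alpha(c)}{1-c} - \frac{q\log n}{(1-c)n}\big)$, so $(1-p)^{n-t'} = \exp\!\big(-(n-t')\big(\frac{\alpha(c)}{1-c} + \frac{q\log n}{(1-c)n}\big)\big) = \exp\!\big(-\alpha(c)n - q\log n + O(\log n/n)\big)$ (the $O(1)$ discrepancy between $n-t'$ and $(1-c)n$ multiplied into $\frac{\alpha(c)}{1-c}$ contributes only an $O(1)$ factor, absorbed into the polynomial slack $q$). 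Again $\expect[M] = n^{-q+o(1)} \to 0$ and Markov finishes the first part; for $p_4$ the sign flips and the same second-moment computation as above (with $k=1$, so only overlap type $j=1$, i.e. $S = S'$, and $j=0$) gives $M \ge 1$ whp.

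**Main obstacle.** The routine part is the exponent bookkeeping; the step requiring the most care is the variance estimate for the lower-bound ($p_2$, $p_4$) statements — specifically verifying that $\sum_{S\ne S'}\mathrm{Cov}(X_S,X_{S'})$ is genuinely $o(\expect[M]^2)$ uniformly. One must check that the worst overlap type (small $j \ge 1$) still contributes a lower-order term: the pair count is $\binom{n}{t'}\binom{t'}{t'-j}\binom{n-t'}{j} \approx \expect\text{-count} \times n^{j} / j!$-ish, while the correlation inflation factor is $(1-p)^{-\binom{n-t'-j}{k-j}} \cdot (\text{normalization})$, and one needs the product to be dominated by $\expect[M]^2$. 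For $k\ge 2$ and $p\to 0$ this is a short computation; for $k=1$ it is immediate since only $j=0,1$ occur. A secondary subtlety is making sure all the $O(1)$-size shifts coming from $t' = t - k$ versus $t' = cn$ (and from floors/ceilings if $cn \notin \mathbb{Z}$) are absorbed into the $q\log n$ slack, which is why the theorem only claims the threshold "to leading order" with a polynomial window.
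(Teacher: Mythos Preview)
Your proposal is correct and follows essentially the same route as the paper: set up $M=\sum_S X_S$ with $X_S$ the indicator that the $t'$-set $S$ is missing, compute $\expect[M]$ via Proposition~\ref{constcbinom}, apply Markov for the complete-skeleton side, and use a second-moment (Chebyshev) argument for the incomplete-skeleton side. The one difference is in the variance step: the paper does not stratify by overlap type $j$ but instead uses the crude count that each $X_S$ is dependent on at most $n^{2k}$ others (fewer than $n^k$ size-$(t'-k)$ subsets of $S$, each in fewer than $n^k$ $t'$-sets) together with the trivial bound $\mathrm{Cov}(X_S,X_{S'})\le \prob(X_S=1)$, giving $\mathrm{Var}(M)\le (n^{2k}+1)\expect[M]$. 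This is coarser than your overlap-by-overlap sketch but suffices once $\expect[M]$ grows like a power of $n$ exceeding $2k$, which is exactly why $q$ must be ``large enough''. Your finer analysis would in principle allow a smaller $q$, but the paper does not pursue that. (One small slip in your $k=1$ paragraph: $j=0$ corresponds to $S=S'$, not $j=1$; the nontrivial dependent case is $j=1$, i.e.\ $|S\cap S'|=t'-1$.)
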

\begin{proof}
	For $x\in [n]^{(t')}$, we write $X_x$ for the indicator of the event x is not a face. Let $X$ denote the number of sets of size $t'$ that are not faces of the complex, i.e. $X=\sum_x X_x$.

	Note that $X_x$ and $X_y$ are independent if no set of size $t$ contains $x$ and $y$, i.e. whenever $|x\cap y|< t'-k$. Thus each $A_x$ is independent from all but at most $n^{2k}$ of the $A_y$, using the crude bound that 
	there are less than $n^k$ subsets of $x$ of size $t'-k$, and each is contained in fewer than $n^k$ sets of size $t'$.

	If $X_x$ and $X_y$ are not independent, we use the easy bound, $\mathrm{Cov}(X_x, X_y)\leq \prob(X_x=1)$. Using Lemma \ref{covariance}, we have that
	\begin{align*}
		\mathrm{Var}(X)&\leq \mathbb{E}(X)+n^{2k}\binom{n}{t'} (1-p)^{\binom{n-t'}{t-t'}}. \\
		&=(n^{2k}+1)\mathbb{E}(X).
	\end{align*}
	(a) By linearity of expectation and using Proposition \ref{constcbinom},
	\begin{align*}
		\mathbb{E}(X)&= \binom{n}{cn-k} (1-p)^{\binom{n-t'}{t-t'}}\\
		&= \exp\left(\alpha(c) n+O(\log n) -p(1+o(1))\frac{(n-cn)^k}{k!}\right),
	\end{align*}
	using the approximation $(1-p)=e^{p(1+o(1))}$, valid if $p=o(1)$.

	It is easy to see that this tends to zero if $p=p_1$, and so in this case, by Markov's inequality, $\Delta$ has a complete $t'$ skeleton with high probability. 

	If instead $p=p_2$, then $\mathbb{E}(X)$ grows like a power of $n$, with the power linear in $q$,  so for $q$ large enough $\mathrm{Var}(X)=o(\expect(X)^2)$.  Hence Chebyshev's Inequality implies that 
	$X>0$ with high probability, i.e. the 	$t'$-skeleton is incomplete with high probability.

	(b) Similarly,
	\begin{align*}
		\mathbb{E}(X)&= \binom{n}{cn-1} (1-p)^{n-cn+1}\\
		&= \exp\left(\alpha(c) n+O(\log n)\right) (1-p) ^{n-cn+1}.
	\end{align*}
	If $p=p_3$, this tends to zero and hence by Markov's Inequality, $X=0$ with high probability.

	If $p=p_4$, then once more, $\mathbb{E}(X)\to \infty$ grows like a power of $n$, with the  power linear in $q$,  so for a large enough $q$  $\mathrm{Var}(X)=o(\expect(X)^2)$. Thus the $t'$-skeleton is, with 
	high probability, not complete.
\end{proof}
For  $\mathcal{RP}\left(n, \frac{n}{2}, \frac{1}{2}\right)$ this  gives a complete $(n/2-2)$-skeleton with high probability,  showing that for this property  $\mathcal{RP}\left(n, \frac{n}{2}, \frac{1}{2}\right)$  provides a correct intuition for the behaviour of $\mathcal{U}(n)$, when $n$ is even.

The case of constant $t$ is even simpler.   
\begin{theorem}\label{skeletonhypergraph}
	Let $\Delta \sim \mathcal{RP}(n,t,p)$, where $t$ is a constant and $t'<t$. If  $p=(1+\omega(n))\frac{ t'(t-t')!\log n}{n^{t-t'}}$, where $\omega(n)\log(n)\rightarrow\infty$, then with high probability, the $t'$-skeleton is complete.
	For $\omega(n)<-2(t/t'-1)$ the $t'$-skeleton is with high probability not complete.
\end{theorem}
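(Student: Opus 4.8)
The plan is to run essentially the same first- and second-moment argument as in the proof of Theorem~\ref{skeletonpure}, which is cleaner here because $t$ and $t'$ are constant. For each $x\in[n]^{(t')}$ I would let $X_x$ be the indicator that $x$ is not a face of $\Delta$; since $x$ is a face exactly when one of the $\binom{n-t'}{t-t'}$ sets of size $t$ containing it is chosen as a facet, and the facets are chosen independently, $\prob(X_x=1)=(1-p)^{\binom{n-t'}{t-t'}}$. Writing $X=\sum_x X_x$, the $t'$-skeleton is complete precisely when $X=0$. One may assume $0<p=o(1)$: if $p$ stays bounded away from $0$ then $\prob(X_x=1)=e^{-\Theta(n^{t-t'})}$ and $\expect(X)\to0$ trivially (settling the first claim), and if the formula for $p$ is non-positive then $\Delta$ has no facets and the second claim is immediate. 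For $0<p=o(1)$ we have $(1-p)^{\binom{n-t'}{t-t'}}=\exp\!\big(-p\binom{n-t'}{t-t'}(1+o(1))\big)$, and combining $\binom{n-t'}{t-t'}=\frac{n^{t-t'}}{(t-t')!}(1+o(1))$ with $\binom{n}{t'}=\frac{n^{t'}}{t'!}(1+o(1))$ and the choice $p=(1+\omega(n))\frac{t'(t-t')!\log n}{n^{t-t'}}$ gives, by linearity of expectation,
\[
\expect(X)=\binom{n}{t'}(1-p)^{\binom{n-t'}{t-t'}}=(1+o(1))\frac{1}{t'!}\,n^{-\omega(n)\,t'}.
\]

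For the first assertion, the hypothesis $\omega(n)\log n\to\infty$ forces $\expect(X)\to0$, so Markov's inequality gives $\prob(X\ge1)\le\expect(X)\to0$ and the $t'$-skeleton is complete with high probability.

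For the second assertion I would estimate $\mathrm{Var}(X)$ using Lemma~\ref{covariance}. The variables $X_x$ and $X_y$ are independent unless some set of size $t$ contains $x\cup y$, that is unless $|x\cup y|\le t$; for fixed $x$ there are only $O(n^{t-t'})$ such $y$, obtained by choosing $y\setminus x$ (of size at most $t-t'$) outside $x$ and $y\cap x\subseteq x$ in $O(1)$ ways. Using the crude bound $\mathrm{Cov}(X_x,X_y)\le\prob(X_x=1)$ for dependent pairs, Lemma~\ref{covariance} yields
\[
\mathrm{Var}(X)\le\expect(X)+O(n^{t-t'})\binom{n}{t'}(1-p)^{\binom{n-t'}{t-t'}}=O\!\big(n^{t-t'}\big)\expect(X),
\]
so that $\mathrm{Var}(X)/\expect(X)^2=O(n^{t-t'})/\expect(X)$. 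When $\omega(n)<-2(t/t'-1)=-2(t-t')/t'$ we get $-\omega(n)t'>2(t-t')$, hence $\expect(X)\ge(1+o(1))\frac{1}{t'!}n^{2(t-t')}$ and, using $t-t'\ge1$, $\expect(X)/n^{t-t'}\to\infty$. Thus $\mathrm{Var}(X)=o(\expect(X)^2)$ and Chebyshev's inequality gives $\prob(X=0)\to0$, i.e.\ with high probability the $t'$-skeleton is not complete.

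The binomial asymptotics and the covariance count are routine; the one step that needs genuine care is that $\omega(n)$ need not be constant — it may tend to $0$ in the first assertion or approach the threshold $-2(t-t')/t'$ in the second — so one must confirm that the multiplicative $(1+o(1))$ errors in the estimate $p\binom{n-t'}{t-t'}=(1+\omega(n))t'\log n\,(1+o(1))$ are genuinely negligible next to $\omega(n)t'\log n$. Since $t-t'\ge1$ these relative errors are $O(1/n)$, and in the second assertion the gap between the factors $2(t-t')$ and $t-t'$ leaves ample slack, so no real difficulty arises.
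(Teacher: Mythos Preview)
Your argument is correct and follows essentially the same first/second moment approach as the paper: define $X=\sum_x X_x$ counting missing $t'$-sets, show $\expect(X)\to0$ under $\omega(n)\log n\to\infty$ via Markov, and use Lemma~\ref{covariance} plus Chebyshev for the lower-tail claim. The one substantive difference is your dependency count: you observe that for fixed $x$ there are only $O(n^{t-t'})$ sets $y$ with $|x\cup y|\le t$, whereas the paper simply reuses the cruder $n^{2(t-t')}$ bound from the proof of Theorem~\ref{skeletonpure}. Your sharper bound makes the second-moment step more robust near the threshold $\omega(n)=-2(t/t'-1)$ (since you only need $\expect(X)\gg n^{t-t'}$ rather than $\expect(X)\gg n^{2(t-t')}$), but for the theorem as stated both suffice. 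One small imprecision: the relative error in $p\binom{n-t'}{t-t'}$ is $O(\log n/n)$ rather than $O(1/n)$ when $t-t'=1$, coming from the $O(p)$ term in $\log(1-p)=-p(1+O(p))$; this does not affect any conclusion.
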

\begin{proof}
	As before, let $X$ denote the number of $t'$-sets that are not faces of $\Delta$, and let $p$ be as in the theorem's hypothesis.
	\begin{align*}
		\mathbb{E}(X)&=\binom{n}{t'}(1-p)^{\binom{n-t'}{t-t'}}\\
		&\leq \exp\left({t'\log n-p \binom{n-t'}{t-t'}}\right)\\
		&\to 0
	\end{align*}
	and hence the $t'$-skeleton is with high probability complete.
	
	For  negative $\omega(n)=-w(n)$ we find that $\mathbb{E}(X)=n^{t' w(n) }$, and using the simple bound  for the  variance of $X$ from the previous proof we find that in order to have $\mathrm{Var}(X)=o(\expect(X)^2)$ we require 
	that $n^{2(t-t')}<n^{t' w(n) }$, or equivalently $\omega(n)< -2(\frac{t}{t'}-1)$

\end{proof}
Taking  $t'=t-1$, for a constant $t$  this shows that when $p\geq (1+\omega(n))\frac{ (t-1)\log n}{n}$, where $\omega(n)\log n\to \infty$,  the random pure model  and the Linial-Meshulam model \cite{linialmeshulam}
will be contiguous. As we have already mentioned  most of the properties which have been studied for that model have thresholds of the form $p=\frac{c}{n}$,  for probabilities of that form the two models are not equivalent.

It is well known that simplicial complex of dimension $d$ can be embedded in $\mathbb{R} ^{2d+1}$, as proven by Wegner \cite{weg}.  Is is also known, e.g. by \cite{vK33}, that if a complex on $n$ vertices has a complete 
$t$-skeleton and $t+3\leq n\leq 2t+3$ then that complex cannot be embedded in  $\mathbb{R} ^{n-3}$. Together this means that  there is a short range of dimensions into which the random uniform complex is embeddable. 
\begin{question}
	For which $d$ can the random uniform simplicial complex on $n$ vertices be embedded in $\mathbb{R} ^{d}$ with high probability?
\end{question}
We may of course as the same question for the random pure model.

\section{Homology}\label{sec:hom}
We will now turn our attention to the topological properties of random complexes from our two models.  We will first bound the rank of the homology group $H_k(\Delta,\mathbb{F})$.

\begin{proposition}
	Let $\Delta\sim \mathcal{U}(n)$, for $n$ even. With high probability, $H_k(\Delta,R)=0$,  unless $n/2-2 \leq k \leq n/2$.
\end{proposition}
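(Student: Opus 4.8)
The plan is to condition on the high‑probability event on which Korshunov's Theorem~\ref{korshunov} applies to $\Delta$: that $\Delta\in\mathcal{M}(n,n/2,A,B)$ for some admissible pair $(A,B)$. I would also use that the complement $\Delta^{c}$ (as in the proof of Theorem~\ref{skeletonuniform}) is again distributed as $\mathcal{U}(n)$, so one may simultaneously assume $\Delta^{c}\in\mathcal{M}(n,n/2,A',B')$ for some admissible $(A',B')$; this is the only place the randomness enters, everything afterwards being deterministic. On this event the two extreme ranges are immediate. For $k\ge n/2+1$ every facet of $\Delta$ has size at most $n/2+1$, so $C_{k}(\Delta,R)=0$ and hence $H_{k}(\Delta,R)=0$. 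For $k\le n/2-4$ I would invoke combinatorial Alexander duality: over a field $\mathbb{F}$ the reduced Betti numbers satisfy $\tilde\beta_{k}(\Delta,\mathbb{F})=\tilde\beta_{n-3-k}(\Delta^{c},\mathbb{F})$, and since $n-3-k\ge n/2+1$ the complex $\Delta^{c}$ has no faces of that dimension, so the right‑hand side is $0$. (Alternatively one uses Theorem~\ref{skeletonuniform} directly: a complete $(n/2-2)$-skeleton makes the $(k+1)$-skeleton of $\Delta$ agree with that of the full simplex for every $k\le n/2-4$, and the full simplex is acyclic there.) The degree $k=0$, if it lies in this range, is the familiar reduced‑versus‑unreduced point and is handled by noting that $\Delta$ is connected.

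The one substantive case is the single boundary degree $k=n/2-3$ --- this is where the bound $n/2-2$ in the statement, rather than $n/2-1$, has to be paid for. Again by duality, $\tilde\beta_{n/2-3}(\Delta,\mathbb{F})=\tilde\beta_{n/2}(\Delta^{c},\mathbb{F})$, so it suffices to show $\tilde H_{n/2}(\Delta^{c},\mathbb{F})=0$. As $\Delta^{c}\in\mathcal{M}(n,n/2,A',B')$ has no face of size exceeding $n/2+1$, $\tilde H_{n/2}(\Delta^{c})=\ker\bigl(\partial_{n/2}\colon C_{n/2}(\Delta^{c})\to C_{n/2-1}(\Delta^{c})\bigr)$, and the facets of size $n/2+1$, i.e.\ the members of $B'$, form a basis of $C_{n/2}(\Delta^{c})$. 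Here the admissibility assumption that $B'$ consists only of one- and two-element bundles is exactly what makes the argument work: a short check shows that each $b\in B'$ has at least $n/2$ of its $(n/2)$-subsets \emph{private}, i.e.\ contained in no other member of $B'$ --- all $n/2+1$ of them when $b$'s bundle is a singleton, and all but the single shared face when its bundle has two elements, no subset of $b$ being contained in a third member of $B'$ since that member would be adjacent to $b$ and hence in its ($\le 2$-element) bundle. Choosing one private $(n/2)$-subset $p_{b}$ of each $b\in B'$, the rows $\{p_{b}:b\in B'\}$ of the matrix of $\partial_{n/2}$ form a diagonal block with $\pm1$ entries, so $\partial_{n/2}$ is injective and $\tilde H_{n/2}(\Delta^{c},\mathbb{F})=0$. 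Putting the three cases together gives $H_{k}(\Delta,\mathbb{F})=0$ for all $k\notin\{n/2-2,n/2-1,n/2\}$.

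I expect the boundary degree $k=n/2-3$ to be the main obstacle: the ranges $k\ge n/2+1$ and $k\le n/2-4$ fall straight out of the structure theorem, but at $k=n/2-3$ a complete $(n/2-2)$-skeleton is one degree too weak, and one needs either the duality reduction above or a direct combinatorial argument --- namely that the $(n/2-1)$-sets which fail to be faces of $\Delta$ are sparsely distributed, no such set having all $n/2+1$ of its codimension‑one neighbours also missing, which forces the boundaries of the $(n/2-1)$-faces present in $\Delta$ to already span $Z_{n/2-3}(\Delta)$. The direct route has the advantage of working over an arbitrary coefficient ring $R$, but it is the more delicate one, the nuisance being the interaction of the missing sets with the exceptional families $A$ and $B$; for general $R$ I would either carry out that argument or reduce to the field case after checking that the relevant homology is torsion‑free.
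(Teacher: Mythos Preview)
Your argument is correct, and on the easy ranges $k\ge n/2+1$ and $k\le n/2-4$ it is essentially the paper's: vanishing chain groups at the top, complete low-dimensional skeleton at the bottom. The paper then disposes of the whole range $k\le n/2-3$ with the single remark that ``the $(k+1)$-skeleton is complete'', whereas you are right that at $k=n/2-3$ this is one degree short --- Theorem~\ref{skeletonuniform} only guarantees faces of size $\le n/2-2$, and the paper itself later notes (proof of Theorem~\ref{eulerthm}) that up to $2^{n/2}$ sets of size $n/2-1$ may be missing, so $B_{n/2-3}$ is not obviously all of $Z_{n/2-3}$. Your Alexander-duality reduction $\tilde\beta_{n/2-3}(\Delta)=\tilde\beta_{n/2}(\Delta^{c})$, followed by the private-face injectivity argument on $B'$, is the clean way to close this; it is in fact the same bundle observation the paper records (with a visible index slip) at ``$k=n/2+1$'', only applied to $\Delta^{c}$ rather than $\Delta$. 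So your proof can be read as making the paper's sketch precise.

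The direct route over an arbitrary ring $R$ is less delicate than you fear, and the families $A,B$ of $\Delta$ do not interfere. Each missing $(n/2-1)$-set $a$ is the complement of some $b\in B'$, and the $\le 2$-element bundle condition on $B'$ gives at most one other missing set $a'$ with $|a\cap a'|=n/2-2$. Choose any $(n/2)$-superset $x\supset a$ with $x\neq a\cup a'$ (there are at least $n/2$ such); then every $(n/2-1)$-subset of $x$ other than $a$ is present, and the identity $\partial\partial x=0$ expresses $\partial a$ as an $R$-linear combination (with unit coefficients $\pm 1$) of boundaries of present faces. Hence $B_{n/2-3}(\Delta)=Z_{n/2-3}(\Delta)$ and $H_{n/2-3}(\Delta,R)=0$ over any $R$, with no appeal to duality or universal coefficients needed.
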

\begin{proof}
	For $k> n/2 +1$ this is trivial as $\Delta$ has no face of size $n/2+2$ or higher, with high probability, and thus $C_{k}(\Delta,R)$ is empty. Similarly, for $k=n/2+1$, the result follows follows from the fact that with 
	high probability the $n/2+1$ sized faces are in bundles of one or two elements only, and thus $Z_{k}(\Delta,R)$ is empty. 

	If $k\leq n/2-3$, this follows from the fact that  with high probability the $(k+1)$-skeleton of $\Delta$ is complete.
\end{proof}

\begin{theorem}\label{uniformhomology}
	Let $\Delta\sim \mathcal{U}(n)$, for even $n$, then with high probability, $H_{n/2-1}(\Delta, R)$ has rank at least $(1+o(1))\frac{2^{(n-1)/2}}{\sqrt{\pi n}}$.
\end{theorem}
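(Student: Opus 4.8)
The plan is to pass to a single Korshunov class and construct cycles explicitly there. By Theorem~\ref{korshunov} it is enough to show the bound holds with high probability in $\mathcal{U}(n,n/2,A,B)$, uniformly over all admissible pairs $(A,B)$; recall that in that model the $(1+o(1))\binom{n}{n/2}$ free $(n/2)$-sets are included independently with probability $1/2$. Call an $(n/2+1)$-set $b$ \emph{eligible} if every $(n/2)$-subset of $b$ is a face of $\Delta$, and let $B^{*}$ be the family of eligible sets, so $B\subseteq B^{*}$; an eligible set not in $B$ is exactly a minimal non-face of $\Delta$ of size $n/2+1$ whose full boundary sphere is present. For $b\in B^{*}$ the chain $\partial b$ (the formal sum of the $(n/2)$-subsets of $b$) lies in $C_{n/2-1}(\Delta)$ and $\partial(\partial b)=0$, so $\partial b\in Z_{n/2-1}(\Delta)$; on the other hand $C_{n/2}(\Delta)$ is spanned by $B$, so $B_{n/2-1}(\Delta)=\mathrm{span}\{\partial b:b\in B\}$, and by condition (5) of admissibility the $\partial b$ with $b\in B$ are (essentially disjointly supported, hence) independent, so $\mathrm{rank}\,B_{n/2-1}(\Delta)=|B|$. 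The point is that the sets $b\in B^{*}\setminus B$ contribute $\approx|B^{*}\setminus B|$ independent classes of $H_{n/2-1}$, and that $|B^{*}\setminus B|=(1+o(1))\,2^{(n-1)/2}/\sqrt{\pi n}$.

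I would first compute $|B^{*}\setminus B|$ by the first and second moment methods. For a generic $(n/2+1)$-set $b\notin B$ (one all of whose $(n/2)$-subsets are free), eligibility means all $n/2+1$ of those subsets were chosen, which has probability $2^{-(n/2+1)}$; there are only $o(\binom{n}{n/2})$ non-generic $b$, by the bounds on $|A|$ and $|B|$. So, using Proposition~\ref{constcbinom}, $\expect|B^{*}\setminus B|=(1+o(1))\binom{n}{n/2+1}2^{-(n/2+1)}=(1+o(1))\,2^{(n-1)/2}/\sqrt{\pi n}$. Two eligibility indicators are dependent only when the corresponding $(n/2+1)$-sets share an $(n/2)$-subset, i.e.\ meet in exactly $n/2$ elements, so each indicator is dependent on only $O(n^{2})$ of the others; Lemma~\ref{covariance} then gives $\mathrm{Var}\,|B^{*}\setminus B|=O(n^{2}\,\expect|B^{*}\setminus B|)=o\big((\expect|B^{*}\setminus B|)^{2}\big)$, and Chebyshev's inequality yields $|B^{*}\setminus B|=(1+o(1))\,2^{(n-1)/2}/\sqrt{\pi n}$ with high probability.

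The main step is to see that almost all cycles $\{\partial b:b\in B^{*}\}$ are linearly independent, and independent of $B_{n/2-1}(\Delta)$. Decompose $B^{*}$ into bundles. Eligible sets in different bundles share no $(n/2)$-subset, so the corresponding chains $\partial b$ have disjoint supports; within a single bundle one checks directly that the $\partial b$ are independent (over any coefficients, since all coefficients of each $\partial b$ are units). Hence, writing $B^{*}_{0}$ for the eligible sets lying in singleton bundles of $B^{*}$, the chains $\{\partial b:b\in B^{*}_{0}\}$ are disjointly supported, hence independent, and $\mathrm{rank}\,Z_{n/2-1}(\Delta)\ge|B^{*}_{0}|$. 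A first moment estimate of the same type as above shows that the expected number of pairs $b\ne b'$ of eligible sets meeting in $n/2$ elements is $O(n^{4})$ (with condition (5) on $B$ bounding the contribution of pairs that meet $B$), so with high probability all but $o(2^{(n-1)/2}/\sqrt{\pi n})$ eligible sets lie in singleton bundles, giving $|B^{*}_{0}|\ge|B|+(1-o(1))\,2^{(n-1)/2}/\sqrt{\pi n}$. Therefore, with high probability and uniformly in $(A,B)$,
\[
\mathrm{rank}\,H_{n/2-1}(\Delta,R)=\mathrm{rank}\,Z_{n/2-1}(\Delta)-\mathrm{rank}\,B_{n/2-1}(\Delta)\ \ge\ |B^{*}_{0}|-|B|\ \ge\ (1-o(1))\,\frac{2^{(n-1)/2}}{\sqrt{\pi n}}.
\]

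I expect this last step to be the real obstacle. A crude dimension count does not work: with $K$ the full $(n/2)$-skeleton one has $Z_{n/2-1}(\Delta)=Z_{n/2-1}(K)\cap C_{n/2-1}(\Delta)$, but the number of present $(n/2)$-faces fluctuates by much more than $2^{(n-1)/2}/\sqrt{\pi n}$, so the generic intersection bound is useless; and combining the Euler characteristic with combinatorial Alexander duality only yields tautologies. What rescues the explicit construction is the bundle structure built into admissibility, which forces the new cycles to be essentially disjointly supported and therefore automatically independent of one another and of $B_{n/2-1}(\Delta)$; the estimates just have to be kept uniform over admissible $(A,B)$, which is why $|A|$ and $|B|$ are carried as lower-order error terms throughout. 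The odd-$n$ version follows by running the same argument inside $\mathcal{M}(n,\ceil{n/2},A,B)$, as in the remark after Theorem~\ref{skeletonuniform}.
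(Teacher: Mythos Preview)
Your proposal is correct and follows essentially the same strategy as the paper: pass to a fixed admissible class $\mathcal{U}(n,n/2,A,B)$, exhibit cycles as boundaries $\partial b$ of $(n/2+1)$-sets whose $(n/2)$-subsets are all present, count these by the first and second moment methods, and show most of them have pairwise disjoint support so that they are independent.

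The one genuine bookkeeping difference is in how independence from $B_{n/2-1}(\Delta)$ is obtained. The paper restricts from the outset to the set $G$ of $(n/2+1)$-sets all of whose $(n/2)$-subsets are \emph{free}; then the cycles $v_x$ for holes $x\in G$ are supported on free $(n/2)$-sets, whereas every generator $\partial b$ of $B_{n/2-1}$ (with $b\in B$) is supported on non-free $(n/2)$-sets, so the span of the $v_x$ meets $B_{n/2-1}$ trivially and one reads off $\dim H_{n/2-1}\ge X-Y$ directly. You instead keep $B$ inside $B^{*}$, prove $\mathrm{rank}\,B_{n/2-1}=|B|$, and use $\mathrm{rank}\,H=\mathrm{rank}\,Z-\mathrm{rank}\,B\ge |B^{*}_{0}|-|B|$; this forces you to verify that almost all of $B$ also lies in singleton bundles of $B^{*}$, which you do via the expected $O(n^4)$ count of intersecting eligible pairs (using condition~(5) for $B$--$B$ pairs and the $2^{-n/2}$ probability for $B$--$(B^{*}\setminus B)$ pairs). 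Both routes arrive at the same bound; the paper's is slightly slicker because the free/non-free support dichotomy kills the interaction with $B_{n/2-1}$ for free, while yours is a touch more explicit about where the rank of the boundary group goes. Your comments about the failure of crude dimension counts and of Alexander-duality arguments are apt and match the paper's choice of an explicit cycle construction.
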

\begin{proof}[Proof of Theorem \ref{uniformhomology}]
    Let $\Delta\sim \mathcal{U}(n,n/2,A,B)$. We will show that for all  fixed admissible pairs,  $(A, B)$, the homology group $H_{n/2-1}(\Delta, \mathbb{F})$ has dimension at least $(1+o(1))2^{(n-1)/2}/\sqrt{\pi n}$ with high 
    probability. By Theorem \ref{korshunov}, this is enough to prove the result.
    
    We let $G$ be the collection of $(n/2+1)$-sets, whose subsets of size $n/2$ are all free. In other words, $G=\{x\in [n]^{(n/2+1)}: x\setminus i\in F \text{ for all } i\in x\}$. Clearly, 
    $|G|=(1+o(1))\binom{n}{n/2+1}=(1+o(1))2^{n+1/2}/\sqrt{\pi n}$. By definition of $F$, none of the sets in $G$ are faces of $\Delta$.

    We say $x\in G$ is a \emph{hole} if all subsets of $x$ are present in $\Delta$ and let $X$ denote the number of holes. We will use Chebyshev's inequality to find a lower bound on $X$ and then use this to prove a lower 
    bound on $\dim (H_{n/2-1})$.
    
    Let $X_x$ be the indicator random variable that is 1 if $x$ is a hole and 0 otherwise. Easily, $X=\sum_{x\in G} X_x$. It is easy to see that $\mathbb{E}(X_x)= \left(\frac{1}{2}\right)^{n/2+1}$ and that 
    $\mathbb{E}(X)=\binom{n}{n/2+1} 2^{-n/2-1}=(1+o(1))2^{(n-1)/2}/\sqrt{\pi n}$.
    
    If $x$, $y\in G$ have intersection of size at most $n/2-1$, then  $X_x$ and $X_y$ are independent, and hence $\mathrm{Cov}(X_x,X_y)=0$.
    
    If instead $|x\cap y|=n/2$, then $x$ and $y$ share a face of size $\frac{n}{2}$ and so
    \begin{align*}
    	\mathrm{Cov}(X_x, X_y)&=\mathbb{P}(X_x=1, X_y=1)-\mathbb{P}(X_x=1)\mathbb{P}(X_y=1)\\
	    &=2^{-n-1}-2^{-n-2}\\
	    &=2^{-n-2}
    \end{align*}

    Thus by Lemma \ref{covariance}, $\mathrm{Var}(X)\leq \mathbb{E}(X)+ n^2 |G| 2^{-n-2}=o(\mathbb{E}(X)^2)$ and so by Chebyshev's inequality, $X\geq (1+o(1))2^{(n-1)/2}/\sqrt{\pi n}$ with high probability. 
    
    A hole $x$ can be identified naturally with  a chain $v_x$, namely $v_x:=\sum_{i\in x}(x\setminus \{i\})$. Each chain $v_x$ is an element of $H_{\frac{n}{2}-1}$, by the definition of a hole.
    
    We now show that almost all of these $v_x$ are linearly independent, in fact, we prove the stronger statement that almost all the holes do not share facets with any other hole. We let $Y_x$ be the indicator random variable 
    that is 1 if $x$ is a hole and and $x$ shares a face with another hole. In other words, $Y_x=1$ if there exists an $x'$ such that $|x\cap x'|=n/2$ and both $x$ and $x'$ are holes. We let $Y=\sum_x Y_x$. Easily, 
    $\mathbb{P}(Y_x=1)\leq n^2 2^{-n-1}$ and so by linearity of expectation, $\mathbb{E}(Y)\leq n^2\binom{n}{n/2+1} 2^{-n-1} $.  By Markov's Inequality,
    \begin{align*}
    	\prob\left(Y\geq \frac{1}{n}\expect(X)\right)&\leq \frac{n\expect(Y)}{\expect(X)}\\
	    &\leq \frac{n^3\binom{n}{n/2+1} 2^{-n-1} }{\binom{n}{n/2+1} 2^{-n/2-1}}\\
	    &=n^3 2^{-\frac{n}{2}}.
    \end{align*}
    Hence,  $Y\leq \frac{1}{n}X$ with high probability, and likewise  $\dim(H_d)\geq X-Y=(1+o(1))X$. 
\end{proof}

We may use the same approach to analyse the top homology of $\mathcal{U}(n,t, A, B)$, when $n$ is odd and $\trange$.

\begin{observation}\label{oddhomology}
	If $\Delta\sim \mathcal{U}(n)$,  $n$ is odd and $\mathbb{F}$ is a field, then with high probability, either $H_{\lfloor n/2 \rfloor-1}(\Delta, \mathbb{F})$ has rank at least 
	$(1+o(1))2^{\lceil n/2 \rceil-1/2}$ or $H_{\lceil n/2 \rceil-1}(\Delta, \mathbb{F})$ has rank at least $(1+o(1))2^{\lfloor n/2 \rfloor-1/2}$.
\end{observation}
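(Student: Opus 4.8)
The plan is to mimic the proof of Theorem~\ref{uniformhomology} almost verbatim, handling the two parities of $\floor{n/2}$ inside an admissible class. By Theorem~\ref{korshunov}, when $n$ is odd it suffices to show that for every admissible pair $(A,B)$ with $\trange$, a complex $\Delta\sim\mathcal{U}(n,t,A,B)$ has, with high probability, $H_{t-1}(\Delta,\mathbb{F})$ of rank at least $(1+o(1))\binom{n}{t+1}2^{-(t+1)}$; summing the two contributions (over $t=\floor{n/2}$ and $t=\ceil{n/2}$) and using that almost all complexes lie in one of the two unions $\mathcal{M}(n,\floor{n/2})$, $\mathcal{M}(n,\ceil{n/2})$ then gives the ``either/or'' conclusion, since the dominant class forces its own homology bound. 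So the real content is a single lemma: for fixed admissible $(A,B)$ and $t\in\{\floor{n/2},\ceil{n/2}\}$, $\dim H_{t-1}(\Delta,\mathbb{F})\ge (1+o(1))\binom{n}{t+1}2^{-(t+1)}$ w.h.p.

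First I would set up the free sets $F$ (the $t$-sets containing no $a\in A$ and contained in no $b\in B$), note $|F|=(1+o(1))\binom{n}{t}$ by the admissibility bounds, and let $G=\{x\in[n]^{(t+1)}:x\setminus\{i\}\in F\text{ for all }i\in x\}$; again $|G|=(1+o(1))\binom{n}{t+1}$ and no $x\in G$ is a face of $\Delta$. Declare $x\in G$ a \emph{hole} if every proper subset of $x$ lies in $\Delta$, let $X_x$ be its indicator and $X=\sum_{x\in G}X_x$. Since each $x\in G$ has exactly $t+1$ facets, all of them free, $\mathbb{E}(X_x)=2^{-(t+1)}$ and $\mathbb{E}(X)=(1+o(1))\binom{n}{t+1}2^{-(t+1)}$, which for $t\in\{\floor{n/2},\ceil{n/2}\}$ evaluates (via Proposition~\ref{constcbinom} with $c=1/2$) to $(1+o(1))2^{\ceil{n/2}-1/2}$ when $t=\floor{n/2}$ and $(1+o(1))2^{\floor{n/2}-1/2}$ when $t=\ceil{n/2}$, matching the two quantities in the statement. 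The covariance computation is identical to the even case: $\mathrm{Cov}(X_x,X_y)=0$ unless $|x\cap y|=t$, in which case $x,y$ share a single $t$-face and $\mathrm{Cov}(X_x,X_y)=2^{-(2t+1)}-2^{-(2t+2)}=2^{-(2t+2)}$; since each $x\in G$ meets at most $O(n^2)$ other sets of $G$ in a $t$-set, Lemma~\ref{covariance} gives $\mathrm{Var}(X)\le\mathbb{E}(X)+O(n^2)|G|2^{-(2t+2)}=o(\mathbb{E}(X)^2)$, so Chebyshev yields $X\ge(1+o(1))\mathbb{E}(X)$ w.h.p.

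Next I would pass from holes to homology exactly as before: a hole $x$ gives the cycle $v_x=\sum_{i\in x}(x\setminus\{i\})\in Z_{t-1}(\Delta,\mathbb{F})$, and $v_x\notin B_{t-1}$ because $x\notin\Delta$ (in fact one checks the $v_x$ are linearly independent once no two holes share a $t$-face). Letting $Y_x$ indicate that $x$ is a hole sharing a $t$-face with another hole and $Y=\sum Y_x$, one has $\mathbb{P}(Y_x=1)\le O(n^2)2^{-(2t+1)}$, so $\mathbb{E}(Y)\le O(n^2)|G|2^{-(2t+1)}$ and Markov gives $Y\le\frac1n X$ w.h.p.; hence $\dim H_{t-1}\ge X-Y=(1+o(1))X=(1+o(1))\binom{n}{t+1}2^{-(t+1)}$ w.h.p. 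The only genuinely new point compared with Theorem~\ref{uniformhomology} is bookkeeping: Theorem~\ref{korshunov} for odd $n$ is a statement about $|\mathcal{M}(n,\floor{n/2})|+|\mathcal{M}(n,\ceil{n/2})|$, so a priori a random $\Delta\sim\mathcal{U}(n)$ could land in either union with non-negligible probability, which is precisely why the conclusion is a disjunction rather than a bound on a single homology group. I expect this splitting argument to be the main (mild) obstacle: one must argue that, conditioned on $\Delta\in\mathcal{M}(n,t)$ for a fixed $t\in\{\floor{n/2},\ceil{n/2}\}$, the distribution is (asymptotically) a mixture of the $\mathcal{U}(n,t,A,B)$ over admissible $(A,B)$, so the high-probability bound transfers, and then take the disjunction over the two possible values of $t$. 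Everything else is a routine rerun of the even-$n$ computation with $n/2$ replaced by $t$.
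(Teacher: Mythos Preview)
Your proposal is correct and is exactly the paper's approach: the paper gives no separate argument for the observation beyond the remark that the proof of Theorem~\ref{uniformhomology} carries over with $t\in\{\lfloor n/2\rfloor,\lceil n/2\rceil\}$ replacing $n/2$, which is precisely what you have written out, including the disjunctive bookkeeping via Theorem~\ref{korshunov}. One minor arithmetic point: $\binom{n}{t+1}2^{-(t+1)}\sim 2^{\,n-t-1/2}/\sqrt{\pi n}$, so the bounds should carry a $1/\sqrt{\pi n}$ factor just as in the even case---the observation as stated appears to have dropped it, but this is orthogonal to your method.
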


We believe that Theorem \ref{uniformhomology} is essentially best possible:
\begin{conjecture}
	Let $\mathbb{F}$ be a field and let $\Delta\sim \mathcal{U}(n)$, for even $n$. Then with high probability, $\dim(H_{n/2-1}(\Delta, \mathbb{F}))=(1+o(1))2^{(n-1)/2}/\sqrt{\pi n}$.
\end{conjecture}

The same methods allow us to prove a similar result for the random pure model. We also see that because $\alpha(1/2)=\log(n)$, the following result agrees with our heuristic that the $\mathcal{RP}(n,n/2,1/2)$ behaves similarly to $\mathcal{U}(n)$ when $n$ is even.

\begin{theorem}
	Let $\mathbb{F}$ be a field,  $\epsilon>0$ a positive constant,  and let $\Delta \sim \mathcal{RP}(n,t,p)$.
	
	(a) Let $t=cn$, for some constant $0<c<1$.  
	
	If $ p\geq e^{-\alpha(c)/c+\epsilon}$, then with high probability $H_{t}$ is non-trivial. If also $p\leq 1-\epsilon$, then 
	$\dim(H_t(\Delta,\mathbb{F}))\geq e^{(1+o(1))\alpha(c)n}p^{cn}$, with high probability.
	
	(b) Let $t$ be a constant. If $np\to \infty$, then with high probability, $H_t$ is non-trivial. If also $p\leq 1-\epsilon$, then $\dim(H_t(\Delta,\mathbb{F}))\geq (1+o(1))\frac{(np)^{t+1}}{t!}$, with high probability.
\end{theorem}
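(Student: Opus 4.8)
\medskip
\noindent\emph{Proof strategy.}
The plan is to transcribe the proof of Theorem~\ref{uniformhomology} to the pure model, which is in fact slightly easier because $\mathcal{RP}(n,t,p)$ is already a product measure and needs no analogue of Korshunov's theorem. The structural fact to exploit is that $\Delta\sim\mathcal{RP}(n,t,p)$ has no face of size exceeding $t$, so $C_t(\Delta,\mathbb{F})=0$; hence $B_{t-1}(\Delta,\mathbb{F})=0$ and the top homology group $H_{t-1}(\Delta,\mathbb{F})$, the one denoted $H_t$ in the statement, equals the cycle space $Z_{t-1}(\Delta,\mathbb{F})$. So it suffices to produce many linearly independent $(t-1)$-cycles. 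Call $x\in[n]^{(t+1)}$ a \emph{hole} if each of its $t+1$ subsets of size $t$ is a facet; since a $t$-set is a face of $\Delta$ exactly when it was chosen as a facet, $\prob(x\ \text{is a hole})=p^{t+1}$, the $t+1$ relevant $t$-sets being distinct and independent. Each hole $x$ yields the chain $v_x=\partial x=\sum_{i\in x}\pm(x\setminus\{i\})$, a nonzero $(t-1)$-cycle, which since $B_{t-1}=0$ is a nonzero class in $H_{t-1}$.

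I would first estimate the number of holes $X=\sum_x X_x$ by a second-moment argument via Lemma~\ref{covariance}, exactly as for Theorem~\ref{uniformhomology}. Two distinct $(t+1)$-sets share at most one $t$-subset, and do so precisely when $|x\cap y|=t$, in which case $\mathrm{Cov}(X_x,X_y)=p^{2t+1}(1-p)\le p^{2t+1}$; otherwise $X_x$ and $X_y$ are independent. As each $x$ is correlated with at most $(t+1)(n-t)\le n^2$ others, $\mathrm{Var}(X)\le\expect(X)+n^2\binom{n}{t+1}p^{2t+1}$, which is $o(\expect(X)^2)$ once $\expect(X)\to\infty$ and $n^{t-1}p\to\infty$ (automatic from $np\to\infty$ when $t\ge2$; the case $t=1$ of (b) is immediate, since then $H_{t-1}=H_0$ is non-trivial once $\Delta$ has a vertex). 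Here $\expect(X)=\binom{n}{t+1}p^{t+1}$, and Proposition~\ref{constcbinom} gives, for $t=cn$, $\expect(X)=\exp\big((1+o(1))\alpha(c)n\big)p^{cn}=\exp\big(n(\alpha(c)+c\log p)+o(n)\big)$, which tends to infinity exactly when $\log p>-\alpha(c)/c$, i.e.\ for $p\ge e^{-\alpha(c)/c+\epsilon}$; for constant $t$, $\expect(X)=(1+o(1))(np)^{t+1}/(t+1)!$, which tends to infinity exactly when $np\to\infty$. In each case Chebyshev gives $X=(1+o(1))\expect(X)$ with high probability, so in particular $X\ge1$ and $H_{t-1}$ is non-trivial with high probability.

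For the quantitative bounds I would then show, as in Theorem~\ref{uniformhomology}, that almost every hole is \emph{isolated}, i.e.\ shares a $t$-subset with no other hole: with $Y=\sum_x Y_x$, $Y_x=1$ iff $x$ is a hole sharing a $t$-subset with some other hole, one gets $\expect(Y)\le n^2\binom{n}{t+1}p^{2t+1}$, so $\expect(Y)/\expect(X)\le n^2p^{t}$, and Markov's inequality yields $Y\le\tfrac1n\expect(X)$ with high probability provided $n^3p^{t}\to0$; with the previous paragraph this gives $Y\le\tfrac2n X$. If $x$ is an isolated hole, then each facet $x\setminus\{i\}$ occurs in $v_x$ but in no $v_{x'}$ with $x'\ne x$ a hole, so in any vanishing combination $\sum_{x'}\lambda_{x'}v_{x'}$ the coefficient of $x\setminus\{i\}$ is $\pm\lambda_x$, forcing $\lambda_x=0$; hence the $v_x$ over isolated holes are linearly independent, and $\dim H_{t-1}\ge X-Y=(1+o(1))\expect(X)$, matching the stated bounds (up to the leading constant in part (b)). In part~(b) one can alternatively use rank--nullity, $\dim H_{t-1}=f_{t-1}-\mathrm{rank}(\partial_{t-1})\ge f_{t-1}-f_{t-2}\ge(1+o(1))\binom{n}{t}p$, which is cleaner for small $p$.

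The moment computations themselves are routine and carry over from Theorem~\ref{uniformhomology} with the pure-model parameters substituted; Proposition~\ref{constcbinom} is the only external input. The genuinely delicate step, and the one I expect to be the main obstacle, is the passage from "many holes" to "many independent homology classes", i.e.\ the bound $n^3p^t\to0$ controlling $Y$: this needs $p\le1-\epsilon$ in part~(a), so that $p^{cn}$ is exponentially small, and for constant $t$ a corresponding smallness assumption on $p$ — when $p$ is too large most holes share facets, the $v_x$ become heavily dependent, and one must keep $p$ small enough that the stated bound stays below the trivial ceiling $\dim H_{t-1}\le f_{t-1}\le\binom{n}{t}$. It is exactly the vanishing of $C_t$ that rescues the argument, since that is what makes every nonzero $(t-1)$-cycle count automatically as a nonzero homology class.
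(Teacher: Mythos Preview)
Your argument is essentially identical to the paper's: define holes as $(t+1)$-sets all of whose $t$-subsets are facets, apply a second-moment estimate via Lemma~\ref{covariance} to count them, then bound the number of non-isolated holes to obtain many linearly independent top cycles. The paper proceeds exactly this way, down to the covariance bound $p^{2t+1}$ and the estimate $\expect(Y)\le n^2\binom{n}{t+1}p^{2t+1}$; your flagged concern about part~(b) when $p$ is not small is a point the paper passes over in a single sentence.
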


\begin{proof}
For a set $x$ of size $t+1$ , we let $X_x$ be the indicator random variable that is 1 when $x$ is a hole, that is all its subsets of size $t$ are faces of $\Delta$. We write $X=\sum X_x$. As in the proof of Theorem \ref{uniformhomology},  for both (a) and (b), we first bound $X$ using Chebyshev's inequality and then show $\dim(H_{t})$ follows the same bounds.

(a) Using Proposition \ref{constcbinom},

\[\expect(X)=\binom{n}{t+1}p^{t+1}=\exp\Big(\alpha(c)n+O(\log n)\Big)  p^{cn}.   \]

Note that for $p$ in this range, this tends to infinity. We can see also that $X_x$ and $X_y$ are independent if $|x\cap y|\leq t-1$. On the other hand, if $|x\cap y|= t$, then $Cov(X_x,X_y)\leq p^{2t+1}$. Thus $Var(X)\leq \mathbb{E}(X)+ n^2 \binom{n}{t+1} p^{2t+1}=o(\mathbb{E}(X)^2)$, and so by Chebyshev's inequality, with high probability, $X\geq \exp\Big(\alpha(c)n+O(\log n)\Big)  p^{cn}$. 

We now show that almost all of these holes are linearly independent, in fact, we show that almost all the holes do not share facets with any other hole. We let $Y_x$ be the indicator random variable that is 1 if $x$ is a hole and and $x$ shares a face with another hole. It is easy to see that $\mathbb{P}(Y_x=1)\leq n^2 p^{2t+1}$ and so by linearity of expectation, $\mathbb{E}(Y)\leq n^2\binom{n}{cn+1} p^{2cn+1} $. It is easy to use Chebyshev's inequality to show that with high probability, $Y\leq 2\mathbb{E}(Y)=o(X)$, using $p\leq 1-\epsilon$. Since $\dim(H_t)\geq X-Y$, this concludes this part of the proof.

(b) In this case,

\[\expect(X)=\binom{n}{t+1}p^{t+1}=(1+o(1))\frac{(np)^{t+1}}{t!} ,   \]
which tends to infinity since $np$ does.

Once again, $X_x$ and $X_y$ are independent if $|x\cap y|\leq t-2$, while if $|x\cap y|= t-1$, $Cov(X_x,X_y)\leq p^{2t+1}$. Thus, again, $Var(X)=o(\expect(X)^2)$ and thus by Chebyshev's inequality,  $X=(1+o(1))\frac{(np)^{t+1}}{t!}$ with high probability. Once more, we let $Y$ be the number of sets $x$ that are holes but share no facet with another hole. We may show that for $p\leq 1-\epsilon$, with high probability, $Y=o(X)$, concluding the proof.
\end{proof}

Once again, we believe that these bounds on $p$ are essentially tight. 

\begin{conjecture}
	Let $\mathbb{F}$ be a field and let $\Delta \sim \mathcal{RP}(n,t,p)$. We also let $\epsilon$ be any positive constant.\\
	(a) Let $t=cn$, for some constant $0<c<1$ .   If $ p\geq e^{-\alpha(c)/c-\epsilon}$, then with high probability $H_{t}$ is trivial. \\
	(b) Let $t$ be a constant. If $np\to 0$, then with high probability, $H_t$ is non-trivial. More precisely, $\dim(H_t(\Delta,\mathbb{F}))\geq (1+o(1))\frac{(np)^{t+1}}{t!}$, with high probability.
\end{conjecture}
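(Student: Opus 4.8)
\noindent\emph{A possible approach.} We read the conjecture as the assertion that, below the stated thresholds, the homology group of the preceding theorem \emph{vanishes} with high probability (the natural converse of that theorem), and treat both parts together. Since $\mathcal{RP}(n,t,p)$ has no face larger than its facets, this homology group equals the corresponding space of cycles, there being no boundaries to quotient out; and a $t$-set is a face of $\Delta$ exactly when it is selected as a facet, these events being independent with probability $p$. Hence it suffices to show that, with high probability, the random facet set $\mathcal F$ supports no nonzero cycle over $\mathbb F$. Call a nonempty family $S$ of $t$-subsets of $[n]$ \emph{doubly covered} if every $(t-1)$-subset of a member of $S$ is contained in at least two members. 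If $z=\sum_{\sigma\in S}c_\sigma\sigma$ is a cycle with every $c_\sigma\neq0$, then its support $S$ is doubly covered, since otherwise some $(t-1)$-face would receive a nonzero coefficient in $\partial z$; moreover two members sharing a $(t-1)$-face share $t-1$ vertices, so every connected component of a doubly covered family (under this adjacency) is again doubly covered. Thus, if $\mathcal F$ carries a nonzero cycle, it contains a \emph{connected} doubly covered family.

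Exchanging one vertex of a member $\sigma$ of a doubly covered $S$ for a vertex outside $\sigma$ produces another member of $S$, and the $t$ members obtained this way are distinct; hence $|S|\ge t+1$, with equality precisely for the $t$-subsets of a $(t+1)$-set, i.e.\ the supports of the holes counted in the theorem above. A union bound therefore gives
\[
\prob\big(H_t(\Delta,\mathbb F)\neq0\big)\ \le\ \sum_{m\ge t+1}p^{\,m}\,N_m,
\]
where $N_m$ is the number of connected doubly covered families of $t$-subsets of $[n]$ with $m$ members. The term $m=t+1$ equals $\binom{n}{t+1}p^{t+1}=\expect(\text{number of holes})$, which by Proposition \ref{constcbinom} equals $\exp(\alpha(c)n+O(\log n))\,p^{cn}$ when $t=cn$ and $(1+o(1))(np)^{t+1}/t!$ when $t$ is constant; in the ranges $p\le e^{-\alpha(c)/c-\epsilon}$ and $np\to0$ respectively, this tends to $0$, exponentially fast in the first case.

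The remaining task, to show $\sum_{m>t+1}p^mN_m=o(1)$, is the crux. Crude bounds are useless: $N_m\le\binom{n}{t}^m$ is far too weak, and ``build the family one facet at a time'' fails because a single facet already costs $\binom{n}{t}p$, which is exponentially large in $n$, whereas an entire hole costs only $\binom{n}{t+1}p^{t+1}=o(1)$; so the enumeration must never price a partial configuration below a hole. What one wants is a bound whose cost \emph{per facet} matches that of a hole, i.e.\ $N_m^{1/m}\le e^{\alpha(c)/c+o(1)}$ up to factors subexponential in $m$, for then $p^mN_m\le e^{-(\epsilon+o(1))m}$ and the tail sums to $o(1)$. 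The plan is to extract this from the rigidity of the doubly covered condition: such a family spans at most $t+m-1$ vertices (take a spanning tree of its adjacency graph; each new facet adds at most one vertex), and, far more restrictively, double coverage should force it to be a tightly overlapping union of $O(m/t)$ holes, hence describable by $O(m/t)$ choices of $(t+1)$-sets together with a polynomial amount of bookkeeping. For constant $t$ the binomials are polynomial in $n$, all such configurations have expected count tending to $0$, and the sum converges termwise, so part (b) should be provable in full by this method; part (a) is where the genuine difficulty lies, and I expect the rigidity/enumeration estimate for connected doubly covered families --- made uniform in $m$ and valid all the way down to $p=e^{-\alpha(c)/c}$ --- to be the real content of the conjecture. (Over a field of characteristic $\neq2$, double coverage is necessary but not sufficient for a linear dependency among boundaries; the union bound uses only the necessary direction, so nothing is lost.)
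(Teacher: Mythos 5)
You are attempting one of the paper's \emph{conjectures}: there is no proof in the paper to compare against, and the printed statement even contains typos (in (a) the hypothesis must be $p\le e^{-\alpha(c)/c-\epsilon}$, else it contradicts the preceding theorem, and in (b) ``non-trivial'' must be ``trivial'', since the displayed lower bound is vacuous when $np\to0$); you read the intended statement correctly. Your set-up is sound as far as it goes: in the top dimension the homology group equals the cycle space because there are no boundaries; the support of a nonzero cycle over any field is doubly covered; each connected component of such a support is again the support of a cycle (two facets sharing a $(t-1)$-set are adjacent, so no $(t-1)$-face is covered from two components); a connected doubly covered family has at least $t+1$ members, with equality only for the $t$-subsets of a $(t+1)$-set; and the union bound $\prob(H\neq0)\le\sum_{m\ge t+1}N_mp^m$ is legitimate, with the $m=t+1$ term tending to $0$ exactly in the conjectured ranges.

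But the proposal stops at what you yourself call the crux, so it is not a proof of either part. For (a) you would need $N_m\le e^{(\alpha(c)/c+o(1))m}$ uniformly for all $m$ up to $\binom{n}{t}$, and no argument is offered; the spanning-tree bound $v\le t+m-1$ is far too weak, and for $c>1/2$ even the trivial bound $N_m\le\binom{\binom{n}{t}}{m}$ exceeds $p^{-m}$ in the middle range of $m$, so the claimed rigidity of doubly covered families is indeed the entire content of the conjecture and remains unproved. Moreover (b) is not actually closed by your remarks either: with only $v\le t+m-1$, the number of ways to place $m$ facets on those vertices grows like $m^{(t-1)m}$, which overwhelms $p^m$ when $np\to0$ slowly (say $np=1/\log n$), so ``the sum converges termwise'' does not follow from the bounds you state; you would need a genuinely stronger structural fact (for instance that a connected doubly covered family with $m$ facets spans at most $m$ vertices, which is clear for $t=2$ but needs proof in general). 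It is worth noting that for constant $t$ the intended statement is essentially available in the literature: the top cycle space of $\mathcal{RP}(n,t,p)$ has the same distribution as that of the Linial--Meshulam-type complex with $(t-1)$-dimensional faces chosen independently with probability $p$ \cite{linialmeshulam,meshulamwallach}, and the vanishing threshold for the top homology in that setting is known \cite{Ko10,ALLM}, precisely via counting arguments over minimal cycle supports of the kind you propose; adapting those arguments would complete part (b), while part (a), with $t=cn$ and $p$ exponentially small, is where the genuinely new enumeration estimate would have to be supplied.
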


\section{The Euler Characteristic}\label{sec:Euler}

In this section, we first give a concentration bound for  the Euler characteristic of $\mathcal{U}(n)$,  and also show that the Euler characteristic of $\mathcal{U}(n)$ is not too tightly concentrated.  The first results shows that the  value of the Euler characteristic is concentrated inside an interval of exponential length in $n$, and the second result demonstrates that the length of this interval cannot be shortened by more than a power of $n$. 

\begin{theorem}\label{eulerthm}
	Let $\Delta\sim \mathcal{U}(n)$, for even $n$. There is a constant, $c$, such that with high probability, $|\chi(\Delta)|\leq cn 2^{n/2}$.
\end{theorem}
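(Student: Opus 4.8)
The plan is to work within Korshunov's structure theorem, so it suffices to bound $|\chi(\Delta)|$ with high probability for $\Delta \sim \mathcal{U}(n, n/2, A, B)$ for every fixed admissible pair $(A,B)$. Write $\chi(\Delta) = \sum_i (-1)^i f_i$, where $f_i$ counts faces of size $i+1$. By Theorem~\ref{skeletonuniform} (or rather the structural facts behind it), with high probability all faces of size $\leq n/2-1$ are present and there are no faces of size $\geq n/2+2$, so modulo a deterministic quantity the only randomness in $\chi(\Delta)$ comes from the faces of size exactly $n/2$, together with the fixed contributions of $A$ (size $n/2-1$) and $B$ (size $n/2+1$). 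Since $|A|, |B| = 2^n + O(n 2^{n/2})$ and the full skeleta below have sizes $\binom{n}{j}$ which are each $O(2^n/\sqrt n) = o(n 2^{n/2} \cdot 2^{n/2})$... in fact the dominant deterministic terms $\binom{n}{j}$ for $j$ near $n/2$ are of order $2^n/\sqrt n$, so I first need to check that the \emph{expected} value of $\chi$ is already controlled. The key cancellation is that the complete skeleton through dimension $n/2-2$ contributes $\sum_{j \leq n/2-1} (-1)^{j-1}\binom{n}{j}$, and using $\sum_{j=0}^n (-1)^j \binom{n}{j} = 0$ together with symmetry of binomial coefficients one sees the deterministic part telescopes down to something of order $\binom{n}{n/2} = O(2^n/\sqrt n)$, which is indeed $O(n 2^{n/2})$ — wait, that is far larger than $n2^{n/2}$. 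So the real point must be a more careful signed cancellation: the "defect" sets of $A$ (missing $(n/2-1)$-faces relative to a complete skeleton) and the "extra" sets of $B$, together with the random $n/2$-faces, must be arranged so the partial alternating sums cancel to within $O(n 2^{n/2})$. I would set this up by writing $\chi(\Delta) = \big[\sum_{j=0}^{n/2+1}(-1)^{j-1}\binom{n}{j}\big] - (\text{number of absent }(n/2-1)\text{-faces}) + (\text{number of absent }n/2\text{-faces}) - (\text{number of absent }(n/2+1)\text{-faces})$, with appropriate signs, and then use the identity $\sum_{j=0}^{m}(-1)^j\binom{n}{j} = (-1)^m \binom{n-1}{m}$ to reduce the deterministic head to $\pm\binom{n-1}{n/2+1} = O(2^n/\sqrt n)$, and observe this still needs the random $n/2$-layer to provide cancellation.

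Let me restate the clean approach: condition on the high-probability event $E$ that the $(n/2-2)$-skeleton is complete and there are no faces of size $\geq n/2+2$. On $E$, every $j$-face with $j \leq n/2-1$ is present, so $f_{j-1} = \binom{n}{j}$ there; the number of faces of size $n/2+1$ is exactly $|B|$; the number of faces of size $n/2-1$ is $\binom{n}{n/2-1}$ (complete), and $|A|$ is instead the set of \emph{facets} of that size, which does not affect $f$-counts. The only genuinely random term is $f_{n/2-1} = $ (number of present $n/2$-faces), which equals (number of free $t$-sets chosen) plus (the $t$-faces forced present because they lie below some $b \in B$ or... no, faces below $b\in B$ of size $n/2$ are automatically faces). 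So $f_{n/2-1} = |\{t\text{-sets below some }b\in B\}| + Z$ where $Z = \mathrm{Bin}(N, 1/2)$ with $N = (1+o(1))\binom{n}{n/2}$ the number of free sets. Then
\begin{align*}
\chi(\Delta) &= \sum_{j=0}^{n/2-1}(-1)^{j-1}\binom{n}{j} + (-1)^{n/2-1} f_{n/2-1} + (-1)^{n/2}|B|.
\end{align*}
The deterministic sum $\sum_{j=0}^{m}(-1)^{j-1}\binom{n}{j}$ with $m = n/2-1$ equals $(-1)^{m-1}\binom{n-1}{m} = (-1)^{n/2}\binom{n-1}{n/2-1}$. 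Now $\binom{n-1}{n/2-1} = \frac12\binom{n}{n/2}$, and $|B| = 2^n + O(n2^{n/2})$, and $\expect f_{n/2-1} = \frac12\binom{n}{n/2} + O(\text{small})$, and $\frac12\binom{n}{n/2}\cdot(\text{these with the right signs})$... I expect, after plugging in and using $\binom{n}{n/2} = (1+o(1))2^{n+1}/\sqrt{2\pi n}$ so $\frac12\binom{n}{n/2}$ is NOT $O(n2^{n/2})$ — it is $\Theta(2^n/\sqrt n)$. So the $\binom{n-1}{n/2-1}$ term, the $|B| \approx 2^n$ term, and $\expect f_{n/2-1} \approx \frac14\binom{n}{n/2}\cdot 2 = $ hmm — these three large $\Theta(2^n)$-scale quantities must algebraically cancel, leaving a residual of order $n2^{n/2}$. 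This cancellation of the $2^n$ terms is forced by $\chi(\Delta)$ being a topological invariant bounded via Betti numbers — but I should verify it directly via the identity above.

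So the two-part structure of the proof is: \textbf{(1)} the algebraic/deterministic part — show that on event $E$ the expectation $\expect[\chi(\Delta) \mid E]$ is $O(n2^{n/2})$ by the binomial identity bookkeeping above, the point being that $|A|$ and $|B|$ each differ from $2^n$ by at most $n2^{n/2+1}$ (admissibility conditions 2 and 3) and these two deviations, together with the gap between $\binom{n-1}{n/2-1}$ and $\frac12\binom{n}{n/2}$ and the deviation of the forced/free-set count, combine with the right signs; and \textbf{(2)} the concentration part — $f_{n/2-1}$ is a sum of at most $\binom{n}{n/2}$ independent indicators, so by a Chernoff/Hoeffding bound $|f_{n/2-1} - \expect f_{n/2-1}| \leq \sqrt{\binom{n}{n/2}}\cdot\sqrt{n} = O(2^{n/2}\sqrt{n}/n^{1/4}) = o(n2^{n/2})$ with high probability (indeed with probability $1 - e^{-\Omega(n)}$). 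Combining, $|\chi(\Delta)| \leq |\expect[\chi \mid E]| + |f_{n/2-1} - \expect f_{n/2-1}| = O(n2^{n/2})$ on a high-probability event, and since $\prob(E) = 1-o(1)$ this gives the theorem (with the odd case handled by the same computation for $\mathcal{U}(n,\lfloor n/2\rfloor)$ and $\mathcal{U}(n,\lceil n/2\rceil)$ separately). The main obstacle I anticipate is the bookkeeping in step (1): making sure the several $\Theta(2^n)$-scale terms cancel exactly and that every remaining discrepancy — the $o(1)$ in "$(1+o(1))\binom{n}{t}$ free sets", the two-element bundles of $A$ and $B$, and the $\pm n2^{n/2+1}$ slack in $|A|,|B|$ — is genuinely $O(n2^{n/2})$ and does not secretly hide a larger term; the concentration step (2) is routine.
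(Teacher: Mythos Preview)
Your approach is the same as the paper's: reduce to $\mathcal{U}(n,n/2,A,B)$ via Korshunov, write $\chi$ as the alternating binomial sum from the complete lower skeleton plus the three middle layers, concentrate $f_{n/2}$ around $\tfrac12\binom{n}{n/2}$ by Hoeffding, and kill the deterministic residue with the symmetry identity (the paper phrases it as $\sum_{i=1}^{n/2-1}(-1)^{i-1}\binom{n}{i}+\tfrac12(-1)^{n/2-1}\binom{n}{n/2}=\tfrac12\sum_{i=0}^n(-1)^{i-1}\binom{n}{i}+O(1)=O(1)$, which is exactly your $\binom{n-1}{n/2-1}=\tfrac12\binom{n}{n/2}$ observation).

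Two fixes are needed. First, all of your worry about ``three $\Theta(2^n)$-scale terms cancelling'' is an artifact of reading admissibility conditions 2 and 3 literally as $|A|,|B|=2^n+O(n2^{n/2})$. That reading is impossible ($|B|\le\binom{n}{n/2+1}<2^n$), and the paper's own proof uses ``since $|A|,|B|\le 2^{n/2}$''; the intended bound is $|A|,|B|=O(2^{n/2})$. With that, $f_{n/2+1}=|B|$ is already $O(n2^{n/2})$, and the only large-scale cancellation is the binomial identity you already have --- no further bookkeeping miracle is required.

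Second, you assert that on the high-probability event the $(n/2-1)$-layer is complete, i.e.\ $f_{n/2-1}=\binom{n}{n/2-1}$. A complete $(n/2-2)$-skeleton does not give this, and membership in $\mathcal{M}(n,n/2,A,B)$ only says the \emph{facets} of size $n/2-1$ form $A$; other $(n/2-1)$-sets can be absent. The paper handles this via the complement trick from Theorem~\ref{skeletonuniform}: since $\Delta^c\sim\mathcal{U}(n)$ as well, with high probability $\Delta^c$ also lies in some $\mathcal{M}(n,n/2,A',B')$, and the missing $(n/2-1)$-sets of $\Delta$ are exactly the $(n/2+1)$-faces of $\Delta^c$, hence number at most $|B'|=O(2^{n/2})$. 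With these two corrections your outline is the paper's proof.
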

\begin{proof}
Let $\Delta\sim \mathcal{U}(n)$. Recall that, $\chi=\sum_{i=1}^{n} (-1)^{i-1} f_{i}$, where $f_{i}$ denotes the number of faces of size $i$. By Proposition \ref{skeletonuniform}, we may assume that $\Delta$ has a complete $(n/2-2)$-skeleton, and thus $f_i=\binom{n}{i}$, for $i\leq n/2-2$. By Theorem \ref{korshunov}, we may assume that $\Delta\in\mathcal{M}(n,n/2,A,B)$, for some admissible $A$ and $B$. Thus we also have that $f_i=0$ for $i\geq n/2+2$ and that $f_{n/2+1}=|B|\leq 2^{n/2}$. It remains only to examine $f_{n/2}$ and $f_{n/2-1}$.

Let $F$ denote the collection of free sets. Recall that since $|A|, |B|\leq 2^{n/2}$, we have that $\binom{n}{n/2}-(n/2+1)2^{n/2+1}\leq |F|\leq \binom{n}{n/2}$. Let $X$ denote the number of free sets that  are faces of $\Delta$. This is a binomial random variable with parameters $|F|$ and $\frac{1}{2}$. It thus has mean $\frac{1}{2}|F|$ and so by Hoeffding's inequality,

\[\mathbb{P}\left(\big|X-\frac{1}{2}|F|\big|\geq 2^{n/2}\right)\leq \exp\left(-\frac{2^{n+1}}{|F|}\right)\to 0,\] since $|F|\leq \binom{n}{n/2}= O\left(\frac{2^{n}}{\sqrt{n}}\right)$.

Combined with our bounds on $|F|$, it is easy to see that $|f_{n/2}-\frac{1}{2}\binom{n}{n/2}|\leq n2^{n/2+1}$ with high probability.

Let $Y$ denote the number of $(n/2-1)$-sets that are not faces of $\Delta$. By considering $\Delta^c$, as in the proof of Proposition \ref{skeletonuniform}, we may assume that $Y\leq 2^{n/2}$ and thus that $f_{n/2-1}\geq \binom{n}{n/2-1}-2^{n/2}$. 

Thus, with high probability,

\begin{align*}
\chi&=\sum_{i=1}^{n/2-1} (-1)^{i-1}\binom{n}{i}+(-1)^{n/2-1} \frac{1}{2}\binom{n}{n/2} + O(n2^{n/2}).
\end{align*}

Note that $\sum_{i=1}^{n/2-1} (-1)^{i-1}\binom{n}{i}+\frac{1}{2}(-1)^{n/2-1}\binom{n}{n/2}=\frac{1}{2}\sum_{i=0}^{n} (-1)^{i-1}\binom{n}{i}-2$, by symmetry of $\binom{n}{i}$ around $i=n/2$. As there are equally many odd sized subsets of $[n]$ as even sized subsets, this sum evaluates to $-2$, concluding the proof.

\end{proof}

The odd case is only slightly more complicated. Suppose that $\Delta\sim \mathcal{U}(n,t)$ for $t\in \trange$. Then it is easy to use the same method as above to prove that with high probability, \[\chi(\Delta)=\sum_{i=1}^{t-1}(-1)^{i-1}\binom{n}{i}+\frac{1}{2}(-1)^{t-1}\binom{n}{t}+O(n2^{n/2}).\] Using the identity $\sum_{i=1}^{j-1} (-1)^{i-1}\binom{n}{i}=(-1)^j\frac{j}{n}\binom{n}{j}+1$, which may be verified by induction on $j$, we get in both cases that with high probability, \[ \chi(\Delta)=(-1)^{\ceil{n/2}}\frac{1}{2n}\binom{n}{\ceil{n/2}}+O(n2^{n/2}).\]

Next we will  prove an anti-concentration result for $\mathcal{U}(n)$.

\begin{theorem}\label{anticorrel}
Let $\Delta\sim \mathcal{U}(n)$, for even $n$. For any natural $k$, $\mathbb{P}(\chi(\Delta)=k)\leq cn^{1/4}2^{-n/2}$, for some constant $c$.
\end{theorem}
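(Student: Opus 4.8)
The plan is to exploit the same structural decomposition used for Theorem~\ref{eulerthm}. Condition on $\Delta \in \mathcal{M}(n,n/2,A,B)$ for a fixed admissible pair $(A,B)$; by Theorem~\ref{korshunov} it suffices to prove the anti-concentration bound uniformly over all such pairs. Under this conditioning the only randomness is which of the $|F| = (1+o(1))\binom{n}{n/2}$ free sets of size $n/2$ are included, each independently with probability $1/2$. Write $\chi(\Delta) = \chi_0 + (-1)^{n/2-1} f_{n/2} + (-1)^{n/2}\, R$, where $\chi_0$ collects the deterministic contribution of faces of size $< n/2 - 1$ (these form a complete skeleton with high probability, hence are fixed) together with $f_{n/2+1} = |B|$ which is also fixed, and $R$ is the (random, but essentially-binomial) number of $(n/2-1)$-faces that fail to be present. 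The key point: modulo a bounded additive constant and the small corrections coming from $R$, $\chi(\Delta)$ is an affine function of $f_{n/2}$, which is a $\mathrm{Bin}(|F|, 1/2)$ random variable. A binomial random variable with $N = \Theta(\binom{n}{n/2}) = \Theta(2^n/\sqrt n)$ trials has maximum point-probability $\Theta(N^{-1/2}) = \Theta(n^{1/4} 2^{-n/2})$ by the local central limit theorem (or Stirling applied to the central binomial coefficient). So if $R$ were deterministic we would be done immediately.

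The first step is therefore to handle $R$. The faces of size $n/2 - 1$ form (with high probability) one- and two-element bundles, so $R$ decomposes as a sum over bundles: each one-element bundle is absent iff a specific independent $1/2$-coin fails, contributing a $\mathrm{Bin}$-type term, and there are only $O(n^4)$ two-element bundles which I will bound crudely. More carefully, $R = R_1 + R_2$ where $R_1$ counts missing $(n/2-1)$-faces that are singleton bundles (these are essentially independent $1/2$ indicators, of which there are $(1+o(1))\binom{n}{n/2-1}$) and $|R_2| \le 2\cdot 16 n^4$. The natural move is to condition further on everything except one ``clean'' coordinate. Concretely, I would pick a single free $n/2$-set $x_0$ whose inclusion affects $f_{n/2}$ by $\pm 1$ and which does not simultaneously force any $(n/2-1)$-face to be present or absent in a way correlated with the other randomness — but actually $f_{n/2}$ and $R_1$ are \emph{not} independent, since whether a face of size $n/2-1$ is present depends on its cofaces of size $n/2$. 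So the cleaner route is: reveal all free $n/2$-sets except one, reveal all of $R$; then $\chi$ is determined up to the single remaining $\pm 1$ coin only when that coin does not change $R$ — which is not guaranteed either.

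So the honest approach is the binomial one directly: show $\chi$ can be written as $g(f_{n/2}) + E$ where $g$ is affine with slope $\pm 1$ (bounded away from $0$), $f_{n/2}\sim\mathrm{Bin}(|F|,1/2)$, and $E$ is a correction term satisfying $\mathbb{P}(|E| \text{ large}) $ small, while crucially $E$ is ``spread out'' relative to $f_{n/2}$'s conditional fluctuation. Rather than fight the dependence, I would instead bound $\mathbb{P}(\chi = k)$ by $\mathbb{P}(\chi = k, R = r)$ summed over $r$, and for each fixed value $r$ of $R$ (and in fact conditioned on the full configuration of $(n/2\pm 1)$-faces), note that $\chi - (-1)^{n/2-1}f_{n/2}$ is then determined, so $\{\chi = k\}$ pins $f_{n/2}$ to one value; but $f_{n/2}$ conditioned on the $(n/2-1)$-faces is a sum of conditionally-independent (or nearly so) $1/2$-Bernoullis — the $n/2$-sets that are \emph{not} the unique subface determining any bundle remain free $1/2$ coins, and there are still $(1+o(1))\binom{n}{n/2}$ of them. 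Hence the conditional distribution of $f_{n/2}$ has maximum atom $O(n^{1/4}2^{-n/2})$, uniformly in the conditioning, and the bound follows by averaging over the conditioning and over $r$ (and absorbing the $o(1)$-probability bad events from Theorem~\ref{korshunov} and Theorem~\ref{skeletonuniform}).

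\textbf{Main obstacle.} The delicate point is the conditional independence argument in the last step: one must verify that even after conditioning on which $(n/2-1)$-faces and $(n/2+1)$-faces are present, a positive proportion (indeed $(1+o(1))\binom{n}{n/2}$) of the $n/2$-sets are still genuinely free, independent, fair coins, so that the conditional law of $f_{n/2}$ is a binomial (or a sum of many independent Bernoulli$(1/2)$'s) with $\Theta(2^n/\sqrt n)$ summands, giving the local-CLT bound $O(N^{-1/2}) = O(n^{1/4}2^{-n/2})$. The bundle structure (one- and two-element bundles, at most $16n^4$ of the latter) is exactly what makes this work: each constraint ``$(n/2-1)$-face $a$ is present/absent'' is either vacuous or ties down only the $O(1)$ — in fact exactly the handful of — $n/2$-sets in a single bundle, so only $O(n^4) = o(\binom{n}{n/2})$ of the $n/2$-coins get frozen by the conditioning. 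Making this counting precise, and checking that the surviving coins are honestly mutually independent under the conditional measure (they are, since in the product measure on free sets, conditioning on events supported on a disjoint coordinate set leaves the rest product), is the crux; everything else is Stirling and a union bound over the $o(1)$-probability exceptional events.
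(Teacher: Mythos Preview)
The overall shape of your argument---reduce to $\mathcal{U}(n,n/2,A,B)$, then isolate enough independent fair coins that affect only $f_{n/2}$ to get binomial anti-concentration of order $|F|^{-1/2}$---is the right one and matches the paper's strategy. But your execution of the ``isolate independent coins by conditioning'' step has a real gap.

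The error is in how you invoke the bundle structure. The one- and two-element bundle condition in the definition of an admissible pair applies to $A$ and $B$, the \emph{fixed} families of $(n/2-1)$- and $(n/2+1)$-facets. It says nothing about a generic $(n/2-1)$-set $a\notin A$. Whether such an $a$ is a face of $\Delta$ depends on \emph{all} of its $\sim n/2$ free $n/2$-superfaces: $a$ is absent iff every one of them is absent, and $a$ is present iff at least one is present. So conditioning on the full $(n/2-1)$-configuration does not ``tie down $O(1)$ coins per constraint.'' An absence constraint freezes $\sim n/2$ coins to~$0$; more seriously, each presence constraint is an OR over $\sim n/2$ coins, and you are imposing roughly $\binom{n}{n/2-1}$ of these simultaneously. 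The conditional law of the remaining free coins is therefore not a product measure, and you have not shown it has atoms of the required size. Your sentence ``conditioning on events supported on a disjoint coordinate set leaves the rest product'' is true but inapplicable: the events here are supported on overlapping coordinate sets of size $\sim n/2$ each.

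The paper sidesteps this dependence by a three-stage revelation rather than by conditioning on the outcome. First expose a sparse random subset $F_1\subseteq F$ (each free set enters $F_1$ independently with probability $p=3\log n/n$) and flip those coins; a coupon-collector calculation shows that after this stage almost every $(n/2-1)$-set is already a face. Let $F_3\subseteq F\setminus F_1$ be those free sets all of whose $(n/2-1)$-subsets are already present; one checks $|F_3|\ge\tfrac12\binom{n}{n/2}$ with high probability. Reveal $F_2=F\setminus(F_1\cup F_3)$ next. At the final stage the coins in $F_3$ are genuinely independent fair Bernoullis that change only $f_{n/2}$, so $\chi$ modulo those coins is already determined, and the point-mass bound is exactly $\max_m\mathbb{P}(\mathrm{Bin}(|F_3|,\tfrac12)=m)=O(|F_3|^{-1/2})=O(n^{1/4}2^{-n/2})$. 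The sprinkling of $F_1$ is precisely the device that manufactures the large family of clean coins your conditioning argument was trying to locate after the fact.
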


\begin{proof}
We fix admissible $(A,B)$ and let $\Delta\sim \mathcal{U}(n,n/2,A,B)$.

We again generate $\Delta$ by choosing free sets to be in $\Delta$ with probability $\frac{1}{2}$. However, we do this in three stages here. We will do this in such a way that the sets in the third stage do not add any $n/2-1$ sized faces to $\Delta$.

In the first stage, we choose $F_1\subseteq F$, by placing free sets in it with probability $p$, to be chosen later. We then choose each of these sets to be in $\Delta$ with probability $\frac{1}{2}$. At this point, almost all of the $(n/2-1)$-sets are in $\Delta$.

We define $F_3\subseteq F\setminus F_1$ to be the collection of free sets whose subsets of size $n/2-1$ are all already faces of $\Delta$. If $x$ is a free set not in $F_1$, then $\mathbb{P}(x\in F_3)= \left(1-\left(1-\frac{p}{2}\right)^{\frac{n}{2}}\right)^{\frac{n}{2}}.$ Thus:
\begin{align*}
\mathbb{E}(|F_3|)&= (1-p)|F|\left(1-\left(1-\frac{p}{2}\right)^{n/2}\right)^{n/2}.\\
\end{align*}

Choose $p=\frac{3\log n}{n}$. Then $\mathbb{E}(|F_3|)=(1+o(1))\binom{n}{n/2}$, and hence $\mathbb{E}(|F_2|)=o\left(\binom{n}{n/2}\right)$. It is easy to show also that $\mathrm{Var}(|F_3|)=o(\mathbb{E}(|F_3|))$. Thus, by Chebyshev's Inequality,  $\mathbb{P}(|F_3|<\frac{1}{2} \binom{n}{n/2})\leq \binom{n}{n/2}^{-1}$. Recall that $\binom{n}{n/2}=\Theta\left(\frac{2^n}{\sqrt{n}}\right)$, and so we may assume this does not happen.

Let $F_2=F\setminus (F_1 \cup F_3)$. We choose sets in $F_2$ to be faces of $\Delta$ with probability $\frac{1}{2}$, then complete the construction of $\Delta$ by repeating this process for sets in $F_3$.

We see that the probability of the Euler characteristic being a given value is at most the probability of $|F_3|$ fair coins having a given number of heads. This is at most $1/\sqrt{|F_3|}=\frac{c\cdot n^{1/4}}{2^{n/2}}$, and so we may conclude. 
\end{proof}
The same methods also show that if $n$ is odd and $\Delta \sim \mathcal{U}(n, t)$, for $\trange$, the conclusion of the theorem still holds, i.e. for any natural $k$, $\mathbb{P}(\chi(\Delta)=k)\leq cn^{1/4}2^{-n/2}$, for some constant $c$.

We believe that if the distribution of $\chi(\Delta)$ is scaled to have variance 1  and mean 0, then the scaled distribution will asymptotically converge to a normal distribution.

An immediate corollary of Theorem \ref{anticorrel} and Theorem \ref{korshunov} is the following, which we will use in Section \ref{evasivenesssec}.
\begin{corollary}\label{anticonc}
	Let $k=k(n)$ be any integer function of $n$ and let $\Delta\sim \mathcal{U}(n)$. With high probability, $\chi(\Delta)\neq k$.
\end{corollary}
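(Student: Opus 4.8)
The plan is to combine Theorem~\ref{korshunov} with Theorem~\ref{anticorrel} (and its odd-$n$ analogue) via a simple union bound over the constantly many relevant parity classes. First I would recall that for $\Delta\sim\mathcal{U}(n)$, Theorem~\ref{korshunov} tells us that almost all complexes lie in $\mathcal{M}(n,n/2)$ when $n$ is even, and in $\mathcal{M}(n,\floor{n/2})\cup\mathcal{M}(n,\ceil{n/2})$ when $n$ is odd; so it suffices to prove the statement conditioned on $\Delta\sim\mathcal{U}(n,t)$ for each relevant $t\in\{\floor{n/2},\ceil{n/2}\}$, since a property holding w.h.p.\ in each of a bounded number of classes that together cover a $(1-o(1))$-fraction of $\mathcal{M}(n)$ also holds w.h.p.\ in $\mathcal{U}(n)$.

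Next I would apply Theorem~\ref{anticorrel} (for even $n$) or its stated odd-$n$ extension: for any fixed natural number $m$, $\prob(\chi(\Delta)=m)\leq cn^{1/4}2^{-n/2}$. The point is that this bound is uniform in $m$, so it applies in particular with $m=k(n)$, giving $\prob(\chi(\Delta)=k(n))\leq cn^{1/4}2^{-n/2}\to 0$. One small wrinkle: the corollary says ``any integer function'', whereas Theorem~\ref{anticorrel} is phrased for natural $k$; I would note that the same proof of Theorem~\ref{anticorrel} goes through verbatim for negative integers (the anti-concentration bound via $|F_3|$ fair coins does not care about the sign or even the range of the target value), or alternatively observe that the values $\chi(\Delta)$ can take are anyway bounded, so only finitely many integer targets are relevant and each is hit with probability $o(1)$.

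Putting it together: for even $n$, $\prob_{\mathcal{U}(n)}(\chi(\Delta)=k(n)) \le \prob_{\mathcal{U}(n,n/2)}(\chi=k(n)) + o(1) = o(1)$, where the first inequality uses Theorem~\ref{korshunov}; for odd $n$ the same holds with the two classes $\mathcal{U}(n,\floor{n/2})$ and $\mathcal{U}(n,\ceil{n/2})$ and a union bound over the (at most two) classes. Hence $\chi(\Delta)\neq k(n)$ with high probability, as claimed.

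There is essentially no obstacle here — the corollary is a genuine corollary, and the only thing requiring a word of care is the transfer from the fixed-class models $\mathcal{U}(n,t)$ back to $\mathcal{U}(n)$, which is exactly the mechanism already set up in the paper (a bounded-index partition covering a $(1-o(1))$-fraction), together with noting that the anti-concentration bound of Theorem~\ref{anticorrel} is uniform over the target value and so survives replacing a constant $k$ by a function $k(n)$.
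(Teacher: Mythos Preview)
Your proposal is correct and matches the paper's approach exactly: the paper states this as an immediate corollary of Theorem~\ref{anticorrel} and Theorem~\ref{korshunov}, which is precisely the combination you describe. Your remark about the ``natural $k$'' versus ``integer $k$'' wrinkle is a fair observation, and your resolution (that the anti-concentration bound via $|F_3|$ fair coins is uniform over the target value regardless of sign) is correct.
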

Note that this implies that with high probability $\Delta$   is note of any fixed simple topological type, e.g. a topological ball.

\section{Induced Subcomplexes}\label{sec:subcomp}
Our next goal is to determine which pure simplicial complexes appear as induced subcomplexes in $\Delta\sim \mathcal{U}(n)$ with high probbility.  The answer turns out be quite simple.

\begin{theorem}
	Let $\Delta\sim \mathcal{U}(n)$ for even $n$ and let $S$ be a pure simplicial complex of dimension $n/2-1$ with $n/2+k$ vertices.\\
	(a) If $k=1$, then $S$ occurs as an induced subcomplex of $\Delta$ and there are  at least $(1+o(1))\frac{2^{n/2-1}}{\sqrt{n}}$ copies of $S$  with high probability.\\
	(b) If $k\geq 2$, then with high probability there is no copy of $S$  in $\Delta$. 
\end{theorem}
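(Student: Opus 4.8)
The plan is to reduce everything, via Theorem \ref{korshunov}, to a single class: it suffices to show (a) and (b) hold with high probability for $\Delta\sim\mathcal{U}(n,n/2,A,B)$, uniformly over admissible pairs $(A,B)$. In that model every free $n/2$-set is a face with probability $\tfrac12$ independently, every $n/2$-set containing some $a\in A$ is a non-face, every $n/2$-set inside some $b\in B$ is a face, and $|A|,|B|=2^{n/2+o(1)}$; I also condition on the high-probability event of Theorem \ref{skeletonuniform} that $\Delta$ has a complete $(n/2-2)$-skeleton. Two immediate observations: an induced copy of $S$ must sit on a vertex set $W$ with $|W|=n/2+k$, and since $\Delta|_W$ then inherits a complete $(n/2-2)$-skeleton, $S$ itself has one; for $k=1$ this forces $S$, up to isomorphism, to be $\partial\Delta^{n/2}$ with at most two facets deleted, so there are only $O(n)$ relevant types and each is determined by its number of facets $m$.

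For (a) I would run the second-moment argument from the proof of Theorem \ref{uniformhomology} essentially verbatim. The candidate host sets are the $(n/2+1)$-sets $W$ all of whose $n/2$-subsets are free; there are $(1+o(1))\binom{n}{n/2+1}$ of these, and for such $W$ the complex $\Delta|_W$ is the complete $(n/2-2)$-skeleton on $W$ together with the $(n/2-1)$- and $n/2$-faces selected by the relevant fair coins. Since a pure $(n/2-1)$-dimensional complex on $n/2+1$ vertices is determined by which of its $n/2+1$ coatoms are facets, the event $\Delta|_W\cong S$ has a probability $p_S$ depending only on $m$. When $S$ has a complete $(n/2-1)$-skeleton ($m\in\{n/2,n/2+1\}$) this event depends only on the $n/2+1$ coatom coins of $W$ — it asks that a prescribed number of them, up to relabelling, are faces, which for $m=n/2+1$ is precisely the ``hole'' event of Theorem \ref{uniformhomology} — so with $X$ counting the host sets $W$ with $\Delta|_W\cong S$ we get $\mathbb{E}(X)=(1+o(1))\binom{n}{n/2+1}p_S$, the covariance analysis is that of Theorem \ref{uniformhomology} ($X_W$ and $X_{W'}$ are independent unless $|W\cap W'|=n/2$, and then $\mathrm{Cov}(X_W,X_{W'})=O(p_S^2)$), and Chebyshev gives $X=(1+o(1))\mathbb{E}(X)$ with high probability; for $m=n/2+1$ this is of order $2^{n/2}/\sqrt n$, yielding the stated count, while $X\ge1$ gives the existence. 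The case $m=n/2-1$ is analogous but additionally needs the unique $(n/2-1)$-set of $W$ not covered by a coatom of $W$ to fail to be covered by an $n/2$-face reaching outside $W$; the bookkeeping of this $(n/2-1)$-skeleton — making sure that matching the $n/2$-faces automatically matches the lower faces in the first cases, and tracking the extra ``outside'' coins in the last — is the main obstacle in (a).

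For (b) the strategy is a union bound over all $\binom{n}{n/2+k}\le 2^n$ vertex sets $W$ of size $n/2+k$, with the obstruction supplied by a \emph{trapped face}. Call an $(n/2-2)$-subset $e$ of $W$ trapped if none of its $\binom{k+2}{2}$ $n/2$-supersets lying inside $W$ is a face of $\Delta$; since $e\in\Delta$ (complete $(n/2-2)$-skeleton), a trapped $e$ forces $\Delta|_W$ to have a facet of dimension at most $n/2-2$, so $\Delta|_W$ is not pure of dimension $n/2-1$ and cannot be a copy of $S$. Let $Z$ count the trapped $(n/2-2)$-subsets of $W$; for all but a negligible family of $W$ (those near $A$ or $B$, of which there are $2^{n/2+o(1)}\,\mathrm{poly}(n)$, and which are dealt with directly — e.g.\ a $W$ containing some $b\in B$ already carries a face of size $n/2+1$) one gets $\mathbb{E}(Z)=(1+o(1))\binom{n/2+k}{k+2}2^{-\binom{k+2}{2}}=\Theta(n^{k+2})\to\infty$. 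The key step is that $Z$ is a function of the $\Theta(n^{k})$ independent coins attached to the $n/2$-subsets of $W$, each of which is a superset of at most $\binom{n/2}{2}=\Theta(n^2)$ of the relevant $(n/2-2)$-sets, so a bounded-differences (McDiarmid) inequality gives $\mathbb{P}(Z=0)\le\mathbb{P}\!\left(|Z-\mathbb{E}Z|\ge\tfrac12\mathbb{E}Z\right)\le\exp\!\left(-\Theta\!\left(n^{k}\right)\right)$, which for $k\ge2$ is at most $\exp(-\Theta(n^2))$ and hence survives multiplication by $2^n$. Obtaining this exponential-in-$n^2$ concentration is the main obstacle in (b): a plain second-moment estimate only delivers $\mathbb{P}(Z=0)=O(n^{-k})$, which is hopeless against $2^n$ vertex sets, and one also has the routine but unavoidable check that the $O(2^{n/2}\,\mathrm{poly}(n))$ many $n/2$-sets frozen by $A$ and $B$ neither deplete $\mathbb{E}(Z)$ nor spoil the bounded-differences estimate except on a negligible collection of $W$.
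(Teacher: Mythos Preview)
For part (a) you follow the paper's second-moment argument on host sets in $G$ essentially verbatim, and you are more careful than the paper about the $(n/2-1)$-skeleton: the paper simply asserts $\mathbb{P}(X_x=1)\ge 2^{-(n/2+1)}$ and computes covariances as though only the $n/2+1$ coatom coins of $x$ were involved, which is literally correct only in the two cases $m\in\{n/2,\,n/2+1\}$ you single out. Your observation that an induced copy forces $S$ to inherit a complete $(n/2-2)$-skeleton, hence $m\ge n/2-1$, is right; the residual case $m=n/2-1$ that you flag genuinely requires the extra ``outside'' coins and is not cleanly handled in either proof.

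For part (b) you take a different and more elaborate route. The paper's argument is a direct first-moment bound: for a fixed vertex set $x$ of size $n/2+2$ it shows (for typical admissible $(A,B)$) that at least $n^2/72$ of the $\binom{n/2+2}{2}$ subsets of size $n/2$ are free, so for any fixed bijection $f\colon s\to x$ the probability that $f$ is a simplicial isomorphism onto $\Delta|_x$ is at most $2^{-n^2/72}$, and a union bound over the at most $\binom{n}{n/2+2}(n/2+2)!$ pairs $(x,f)$ finishes. Your trapped-face count combined with McDiarmid is correct and arrives at the same $\exp(-\Theta(n^2))$ tail; it buys the uniform-in-$S$ conclusion that $\Delta|_W$ is not even pure of dimension $n/2-1$, at the cost of a concentration step the paper's argument does not need. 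In both approaches the handling of the non-free $n/2$-subsets of $W$ is the routine but unavoidable nuisance you identify.
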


\begin{proof}
(a) Let $\Delta\sim \mathcal{U}_{n,n/2,A,B}$, for fixed admissible $(A,B)$. 

Let $G$ be the collection of sets of size $n/2+1$ such that all their subsets of size $n/2$ are free. For $x\in G$, let $X_x$ be the indicator random variable that is 1 if the faces of $x$ form a copy of S, and let $X=\sum_{x\in G} X_x$.

Since $|G|=(1-o(1))\binom{n}{n/2}$, we have that
\[\mathbb{E}(X)\geq |G|\left(\frac{1}{2}\right)^{n/2+1}=(1-o(1))\frac{2^{n/2-1}}{\sqrt{n}}.\]

If $|x\cap y|\leq n/2-2$, then $X_x$ and $X_y$ are independent random variables. If instead $|x\cap y|= n/2-1$, it is easy to see that $\mathrm{Cov}(X_x, X_y)= 2^{-n-2}$ , and so $Var(X)=o({2^{n/2}}{\sqrt{n}})$.

By Chebyshev's inequality, with high probability, $X\geq (1-o(1))\frac{2^{n/2-1}}{\sqrt{n}}$, concluding the proof.

(b) It is sufficient to prove this for $k=2$.  Consider a set $x$ of $n/2+2$ vertices, and let $s$ be the vertex set of the complex $S$.  We will first show that with high probability, at most $n^2/9$ of the $n/2$-sets of $x$ are not free.  It is easy to see deterministically that  at most $2n+2$ of the $n/2$-sets of $x$ are  contained in some $b\in B$. We thus concentrate our attention on the $a\in A$ that lie in $x$; it is enough to show there are at most $n/10$ of these. It is easy to see that the proportion of admissible pairs $(A,B)$ for which this happens is vanishingly small.

We now consider the probability that a fixed bijection, $f:s\to x$ induces a simplicial isomorphism between $S$ and the subcomplex of $\Delta$ induced by the vertices of $x$.
Due to the above calculation, we may assume that at least $\frac{n^2}{72}$ of the subsets of $x$ of size $n/2$ are free.   Thus the required probability is at most $2^{-\frac{n^2}{72}}$.

We write $X$ for the number of copies of $S$ in $\Delta$. Therefore,
\[\mathbb{P}(X)\leq \binom{n}{n/2+2}(n/2+2)!\, 2^{-\frac{n^2}{72}}.\]

This tends rapidly to zero, and so with high probability, there is no copy of $S$ in $\Delta$.

\end{proof}

It is easy to see that the same method can be used to prove similar results for odd $n$. Indeed let $\trange$ and let $S$ be a pure complex of dimension $t$ on $t+k$ vertices. Suppose that $\Delta \sim \mathcal{U}(n,t)$. If $k=1$, then $\Delta$ contains at least $(1+o(1))\frac{2^{n-t-1}}{\sqrt{n}}$ copies of $S$ with high probability. If $k=2$ then with high probability, $\Delta$ contains no copies of $S$.

\section{Shellability and the $h$-Vector}\label{shellability}\label{sec:shell}

In this section we investigate shellability and the related notions of the $h$-vector and of CM-complexes

We say that a complex is \emph{shellable} if its facets can be arranged in an order, $x_1, \dots, x_m$ in such a way that for all $i<j$,  if $x_i\cap x_j$ is non-empty, then there is some $k\leq j$ for which $x_i\cap x_j\subseteq x_k\cap x_j$ and  $|x_k\cap x_j|=|x_j|-1$. Such an order is called a \emph{shelling}. We refer to \cite{bjornerwachs} for a broader introduction ,where among other things, it is shown that if $\Delta$ is shellable, then it has a shelling with facets in non-increasing order of size.

It is easy to see that with high probability the uniform random complex is not shellable.
\begin{theorem}
	For all $n$, if $\Delta\sim \mathcal{U}(n)$ then with high probability, $\Delta$ is not shellable.
\end{theorem}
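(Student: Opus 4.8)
The plan is to combine Korshunov's structure theorem with a purely deterministic observation: for $n$ large, no complex in any class $\mathcal{M}(n,t,A,B)$ with $(A,B)$ admissible is shellable, because the facets of maximum size $t+1$ form far too many one-element bundles.

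By Theorem \ref{korshunov}, with high probability $\Delta$ lies in $\mathcal{M}(n,t,A,B)$ for some admissible pair $(A,B)$, where $t=n/2$ if $n$ is even and $t\in\{\lfloor n/2\rfloor,\lceil n/2\rceil\}$ if $n$ is odd. So it suffices to show that, for all sufficiently large $n$, no such $\Delta$ is shellable. Fix one and suppose for contradiction that $\Delta$ is shellable. By the Bj\"orner--Wachs result quoted above, $\Delta$ then has a shelling $x_1,\dots,x_m$ with $|x_1|\ge\cdots\ge|x_m|$. Every facet of $\Delta$ has size in $\{t-1,t,t+1\}$, and the facets of size $t+1$ are exactly the members of $B$; hence the initial segment $x_1,\dots,x_{|B|}$ is a listing of $B$.

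The heart of the argument is to see what the shelling condition forces on this segment. Any two distinct members $b,b'$ of $B$ satisfy $|b|+|b'|=2(t+1)>n$ --- this holds for $t=n/2$ and for $t=\lfloor n/2\rfloor,\lceil n/2\rceil$ alike --- so $b\cap b'\ne\emptyset$. Therefore, for each $j$ with $2\le j\le|B|$, applying the shelling condition to the pair $i=1<j$ yields an index $k\le j$ with $x_1\cap x_j\subseteq x_k\cap x_j$ and $|x_k\cap x_j|=|x_j|-1=t$; since $|x_j\cap x_j|=t+1\ne t$ we must have $k<j$, so $x_k\in B$, and the distinct $(t+1)$-sets $x_k,x_j$ meet in $t$ elements, i.e.\ they lie in a common bundle of $B$. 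By admissibility every bundle of $B$ has one or two elements, so this says $x_j$ lies in a two-element bundle of $B$ for every $j\ge2$. Thus at most one member of $B$, namely $x_1$, can lie in a one-element bundle.

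This contradicts the size estimates built into the definition of an admissible pair. Since $B$ has at most $16n^4$ two-element bundles, at most $32n^4$ of its members lie in such bundles, whereas $|B|\ge 2^n-n2^{n/2+1}$; hence at least $2^n-n2^{n/2+1}-32n^4\ge 2$ members of $B$ lie in one-element bundles once $n$ is large. This contradiction shows $\Delta$ is not shellable for all large $n$, and together with Theorem \ref{korshunov} this gives the statement with high probability. I do not expect a genuine obstacle here; the only points needing care are invoking the rearrangement of a shelling into non-increasing order of facet size and reading the shelling condition correctly --- in particular that the witnessing index is strictly below $j$, and that an isolated top facet cannot be attached after another facet of the same size.
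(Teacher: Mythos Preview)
Your proof is correct and follows essentially the same approach as the paper: reduce via Korshunov's theorem to $\mathcal{M}(n,t,A,B)$ and then use that the top-level facets $B$ cannot be compatibly shelled because of the bundle structure. The paper compresses the combinatorial step into the single remark that ``having two bundles in the $(t+1)^{\text{th}}$ layer implies $\Delta$ is not shellable,'' which your argument unpacks explicitly (rearranging by facet size, observing any two $(t+1)$-facets intersect, and forcing each $x_j$ with $j\ge2$ into a two-element bundle).
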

\begin{proof}
	By Theorem \ref{korshunov} it is sufficient to prove this for $\Delta\sim\mathcal{U}(n,t)$ for $\trange$. It is easy to see that having two bundles in the $(t+1)^{th}$ layer implies $\Delta$ is not shellable. 
\end{proof}

The situation is more complicated for the random pure model, and our proof does not show that  $\mathcal{RP}(n,n/2,1/2)$, which would be the direct analogue of the result for the uniform model, but comes arbitrarily close to $p=1/2$. 
\begin{theorem}\label{pure shellable}
	Let $\Delta\sim \mathcal{RP}(n,t,p)$, and let $k>0$ and $\epsilon>0$ be constants.\\
	(a) Let $t=cn$, for some constant $c$. If  $n^{-k}<p<1-\exp\left(-\frac{\alpha(c)}{2(1-c)}+\epsilon \right)$, then with high probability, $\Delta$ is not shellable.\\
	(b) Let $t>2$ be a constant. If $n^{-t+\epsilon}<p<(\frac{t-1}{2}+\frac{1}{\log(n)})\frac{\log n}{n}$ then with high probability, the complex $\Delta$ is not shellable.
\end{theorem}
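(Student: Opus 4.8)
The strategy is to show that in the stated ranges $\Delta$ is, with high probability, not Cohen--Macaulay; non-shellability then follows, since a shellable complex is Cohen--Macaulay (see \cite{bjornerwachs}). Every facet of $\Delta$ has size $t$, so $\Delta$ is pure of dimension $t-1$, and it is enough to find a face whose link violates the local homology condition for Cohen--Macaulayness. I will use the simplest possible such link: the link of a face $\sigma$ of size $t-2$ has dimension $1$, and in a Cohen--Macaulay complex it would have to be a connected graph, so it suffices to produce, with high probability, a set $\sigma \in [n]^{(t-2)}$ that is a face of $\Delta$ and for which $Lk(\sigma)$ is disconnected.

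Call a pair $(\sigma,\{v,w\})$, with $\sigma\in[n]^{(t-2)}$ and $\{v,w\}\subseteq[n]\setminus\sigma$, a \emph{pendant configuration} if $\sigma\cup\{v,w\}$ is a facet, $\sigma$ is contained in at least one further facet, and neither $\sigma\cup\{v\}$ nor $\sigma\cup\{w\}$ is contained in any facet other than $\sigma\cup\{v,w\}$. For such a pair the edge $\{v,w\}$ is a connected component of $Lk(\sigma)$ isomorphic to $K_2$, while $Lk(\sigma)$ has a further vertex, so $Lk(\sigma)$ is disconnected. Let $N$ count the pendant configurations. The defining conditions constrain pairwise disjoint families of $t$-sets, so
\[
\mathbb{E}(N)=\binom{n}{t-2}\binom{n-t+2}{2}\,p\,(1-p)^{2(n-t)}\Bigl(1-(1-p)^{\binom{n-t}{2}}\Bigr).
\]
Using Proposition~\ref{constcbinom} in case~(a) and elementary estimates in case~(b), the final factor equals $1-o(1)$ throughout the given ranges, and a short computation shows that $\mathbb{E}(N)\to\infty$ exactly when $\alpha(c)+2(1-c)\log(1-p)>0$, i.e.\ when $p<1-\exp\left(\frac{-\alpha(c)}{2(1-c)}\right)$ in case~(a), and exactly when $p<\left(\frac{t-1}{2}+o(1)\right)\frac{\log n}{n}$ in case~(b). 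This is precisely why the upper bounds on $p$ in the theorem take their stated shape; the slack in them ($+\epsilon$, respectively the extra $\frac{1}{\log n}$) ensures that $\mathbb{E}(N)$ grows comfortably to infinity — exponentially in $n$ in case~(a) — leaving room for a second moment argument.

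To upgrade this to $N>0$ with high probability I would estimate $\mathrm{Var}(N)$ through Lemma~\ref{covariance}: the indicator of a pendant configuration depends only on whether certain explicit $t$-sets are facets, so two configurations have nonzero covariance only when the $t$-sets they constrain meet, which forces $\sigma$ and $\sigma'$, or $\{v,w\}$ and $\{v',w'\}$, to overlap heavily; organising the sum of covariances according to these overlaps and bounding each term gives $\mathrm{Var}(N)=o(\mathbb{E}(N)^2)$, and Chebyshev's inequality then yields $N>0$ with high probability. The lower bound on $p$ ($p>n^{-k}$ in (a), $p>n^{-t+\epsilon}$ in (b)) is used to guarantee that $\Delta$ is with high probability connected, so that shellability in the sense used in this paper coincides with the usual notion and the implication ``shellable $\Rightarrow$ Cohen--Macaulay'' is available, and, in case~(b), to ensure the face $\sigma$ found really does lie in a further facet.

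The step I expect to be the main obstacle is the variance estimate. In case~(a) the factor $\binom{n}{t-2}$ is exponential in $n$, so the covariance bounds must be exponentially sharp, and this is where the precise asymptotics of Proposition~\ref{constcbinom} are needed; one must also verify that sharing a constrained $t$-set is the only mechanism producing dependence between configurations, and that configuration pairs whose $\sigma$'s are close but whose $\{v,w\}$'s are far (and conversely) account for only a vanishing fraction of $\mathbb{E}(N)^2$. A secondary point needing care is the passage from ``not Cohen--Macaulay'' to ``not shellable'' under the definition of shellability adopted here, which is the reason connectivity of $\Delta$ must be established first.
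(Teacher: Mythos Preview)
Your approach is essentially the paper's: both proofs pick a $(t-2)$-set $\sigma$, look at the facets through it, and show by a second-moment computation that with high probability some facet through $\sigma$ is ``isolated'' (shares no $(t-1)$-face with another facet through $\sigma$) while $\sigma$ lies in at least one further facet. Your ``pendant configuration'' $(\sigma,\{v,w\})$ is exactly what the paper calls an isolated vertex in the $\sigma$-intersection graph $G_\sigma$ when $|G_\sigma|>1$; the paper just separates the two requirements, counting isolated vertices $N$ and single-vertex graphs $M$ and showing $N-M>0$, whereas you fold them into one count. Your expectation formula and theirs differ precisely by the factor $1-(1-p)^{\binom{n-t}{2}}$, which is the paper's $1-\mathbb{E}(M)/\mathbb{E}(N)$.

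The genuine difference is the framing. You pass through ``shellable $\Rightarrow$ Cohen--Macaulay'' and then exhibit a disconnected one-dimensional link; the paper instead argues directly from the definition of a shelling that the facets containing $\sigma$ must be addable in an order that keeps $G_\sigma$ connected, so a nontrivial disconnected $G_\sigma$ obstructs any shelling. The direct route is cleaner here because it makes your worry about connectivity of $\Delta$ disappear: no appeal to the CM implication is needed, and the argument works verbatim under the shellability definition stated in the paper.

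Relatedly, you have slightly misdiagnosed the role of the lower bound on $p$. In the paper it is not used to force $\Delta$ to be connected; it is used (in both (a) and (b)) to make $\mathbb{E}(M)=o(\mathbb{E}(N))$, i.e.\ to ensure that the ``$\sigma$ lies in a further facet'' condition is typically satisfied---exactly the reason your factor $1-(1-p)^{\binom{n-t}{2}}$ needs to stay bounded away from $0$. You note this for (b) but it is equally the point in (a). Once you drop the CM detour and argue directly as the paper does, the connectivity issue evaporates and the lower bound on $p$ plays only this single role.
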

\begin{proof}
Given $\Delta$ and a $(t-2)$-set, $x$, we write $G_x$ for the graph whose vertex set is the set of facets of $\Delta$ that contain $x$, and an edge is present between two facets if their intersection is a set of size $t-1$. We call this the \emph{$x$-intersection graph} of $\Delta$.

Note that if $G_x$ is not connected, then $\Delta$ is not shellable (a graph with no vertices is here considered to be connected). This is because in a shelling, after the first facet that contains $x$, each subsequent facet that contains $x$ must have an intersection of size $t-1$ with a previously added facet that contains $x$. In particular, if $G_x$ has an isolated vertex, then $\Delta$ is not shellable, unless $G_x$ is a single vertex.

Let $N_{x,v}$ be the indicator random variable that is 1 if and only if the vertex $v$ in the graph $G_x$ is isolated.

We write $N_x$ for the number of isolated vertices in the graph $G_x$, and $N$ for the total number of isolated vertices in all of these graphs. That is, $N_x=\sum_v N_{x,v}$ and $N=\sum_x N_x$. It is easy to see that:

\[
\mathbb{E}(N_x)=\binom{n-t+2}{2}p (1-p)^{2(n-t)}.
\]

Since $N_{x,v}$ depends only on the presence of $t$-sets $u$ for which $|u\cap v|\geq t-1$, the random variables $N_{x,v}$ and $N_{x',v'}$ are independent unless $|v\cap v'| \geq t-2$.

(a) Using Proposition \ref{constcbinom}, we see that:
 
\begin{align*}
\mathbb{E}(N)=& \binom{n}{cn-2}\binom{n-cn+2}{2}p (1-p)^{2(1-c)n}\\ 
&=\exp\big(\alpha(c)n +O(\log n)\big) p(1-p)^{2(1-c)n}
\end{align*}

Note that this is unimodal as a function of $p$, and hence its lowest value in the range of $p$ in the theorem is attained by one of the endpoints of the range. Thus for $p$ in the range of the theorem, this tends to infinity exponentially fast. Using the bound $\mathrm{Cov}(N_{x,v}, N_{x',v'})\leq\prob(N_{x,v}=1)= p(1-p)^{2(n-t)}$
\begin{align*}
\mathrm{Var}(N)&\leq \mathbb{E}(N) + n^4 \binom{n-cn+2}{2} \binom{n}{cn-2} p (1-p)^{2(1-c)n}\\
&= (1+n^4)\mathbb{E}(N)\\
&=o(\expect(N)^2).
\end{align*}
Thus Chebyshev's Inequality implies that $N=(1+o(1))\mathbb{E}(N)$  with high probability.

Let $M_x$ denote the indicator random variable that is 1 if $G_x$ consists of only one vertex, and let $M=\sum_x M_x$.

\[\mathbb{E}(M)= \binom{n}{cn-2}\binom{n-cn+2}{2}p (1-p)^{\binom{n-cn+2}{2}-1}.\]

Since $p>n^{-2+\epsilon}$, this is a smaller order of magnitude than $\expect(N)$, so by Markov's Inequality, $M=o(N)$ with high probability. Thus with high probability, $N-M>0$ and so the complex is not shellable.

(b) 
We again use the linearity of expectation to see that:
\begin{align*}
\expect(N)&=(1+o(1))\frac{n^{t-2}}{(t-2)!}\frac{n^2}{2}p(1-p)^{2(n-t)}\\
&=(1+o(1))\frac{n^{t}}{2(t-2)!}p \exp(-2pn)\\
\end{align*}

For $p$ in the range of the theorem, this tends to infinity. This time we require a slightly more delicate argument to bound the covariances. We make use of the fact that $\mathrm{Cov}(N_{x,v}, N_{x',v'})\leq \prob(N_{x,v}=1 \text{ and } N_{x',v'}=1)\leq (1-p)^{4(n-t)-f(x,x',v,v')}$, where $f(x,x',v,v')$ denotes the number of $t$-sets, $w$, such that $x, x'\subseteq w$, $|v\cap w|=t-1$ and $|v'\cap w|=t-1$.       If $|v\cap v'|=t-2$, it is easy to see that $f(x,x',v,v')\leq 4$. If $|v\cap v'|=t-1$, we can see that   $f(x,x',v,v')\leq n$; again, we do not require the condition that $x, x'\subseteq w$ for this bound.   Suppose instead that $v=v'$. We assume $x\neq x'$ (otherwise the covariance is zero). Then $|x\cup x'|\geq t-1$, so we have at most $n-t$ choices for $w$, and so $f(x,x',v,v')\leq n-t$.   Hence,
\begin{align*}
\mathrm{Var}(N)\leq &\mathbb{E}(N)+t^2n^2\binom{n-t+2}{2}\binom{n}{t-2} p^2(1-p)^{4n-4t-4}\\
&\phantom{\mathbb{E}(N)}+tn\binom{n-t+2}{2}\binom{n}{t-2}p^2(1-p)^{3n-4t}\\
=&o(\mathbb{E}(N)^2).
\end{align*}

Thus by Chebyshev's Inequality, $N=(1+o(1))\mathbb{E}(N)$  with high probability.

Let $M_x$ denote the indicator random variable that is 1 if $G_x$ consists of only one vertex, and let $M=\sum_x M_x$.

\[\mathbb{E}(M_x)= \binom{n}{t-2}\binom{n-t+2}{2}p (1-p)^{\binom{n-t+2}{2}-1}.\]

Once again, this is a smaller order of magnitude than $\expect(N)$, so by Markov's Inequality, $M=o(N)$ with high probability. Thus with high probability, $N-M>0$ and so the complex is not shellable.

\end{proof}
The upper bound for constant $t$ could be improved to $\frac{c\log(\log(n))}{\log(n)}$, at the cost of more extensive calculations.

Next we will introduce an important concept related to the well studied class of Cohen-Macaulay complexes, or CM-complexes for short.  A simplicial complex is said to be Cohen-Macaulay if $\bar{H}_i( Lk( x  )   ,R)=0$  for $i< dim(Lk(x))$ for every $x\in \Delta$. The original definition given by Stanley \cite{Stan75}  was stated in terms of a polynomial ring associated to $\Delta$   and the characterisation used here was given by Reisner \cite{Rei76}.

For a pure complex $\Delta$, with facets of size $t$,   the \emph{$h$-vector} $$h(\Delta)=(h_0(\Delta),h_1(\Delta), \dots h_t(\Delta))$$ is defined by the relation $h_k=\sum_{i=0}^k (-1)^{k-i} \binom{t-i}{k-i} f_{i}$, where, as before, $f_{i}$ denotes the number of faces of size $i$. In particular, $h_t=(-1)^t\sum_{i=0}^t(-1)^i f_i$, and is a multiple of the Euler characteristic.

In the range of large $t$, we are able to find ranges of $p$ for which $h_t(\Delta)<0$ with high probability, if $\Delta\sim \mathcal{RP}(n,t,p)$. It is well known that all CM-complexes have non-negative $h$-vector, so this gives a lower bound on the threshold for being a CM complex

\begin{theorem}
Let  $\Delta\sim \mathcal{RP}(n,t,p)$, let $\epsilon$ be a positive constant and let $t=cn$ for some constant $0<c<1$. If $ \frac{2\alpha(c)+\epsilon}{2(1-c)^2 n }<p <\frac{t-\epsilon}{n}$, then with high probability, $h_t<0$, and for $p >\frac{t-\epsilon}{n}$, we have $h_t>0$ with high probability.

 (b) Let $t>2$ be a constant.  If $(1+\omega(n))\frac{2(t-2)\log n}{n^2}<p< \frac{t-\epsilon}{n}$, then with high probability, $h_t<0$,  and for $p >\frac{t-\epsilon}{n}$, we have $h_t>0$ with high probability.  
\end{theorem}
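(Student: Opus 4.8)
The plan is to express $h_t$ as a short, explicit combination of a handful of face numbers, and then estimate those by first and second moments, in the spirit of the proofs of Theorems~\ref{skeletonpure} and~\ref{pure shellable}. Write $Z$ for the number of $(t-1)$-subsets of $[n]$ lying in no facet of $\Delta$ and $W$ for the number of $(t-2)$-subsets lying in no facet, so $f_{t-1}=\binom{n}{t-1}-Z$ and $f_{t-2}=\binom{n}{t-2}-W$. Starting from $h_t=(-1)^t\sum_{i=0}^t(-1)^if_i$, I first pin down the low layers. In the linear case $t=cn$, the hypothesis $p>\tfrac{2\alpha(c)+\epsilon}{2(1-c)^2n}$ is well above the threshold of Theorem~\ref{skeletonpure} for a complete $(t-3)$-skeleton (that threshold is of order $n^{-2}$), so with high probability $f_i=\binom ni$ for all $i\le t-3$; in the bounded case $p>(1+\omega(n))\tfrac{2(t-2)\log n}{n^2}$ lies above the threshold of Theorem~\ref{skeletonhypergraph} for a complete $(t-2)$-skeleton, so with high probability $f_i=\binom ni$ for all $i\le t-2$, i.e.\ $W=0$. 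Substituting this into the alternating sum and collapsing it with the telescoping identity $\sum_{i=0}^m(-1)^i\binom ni=(-1)^m\binom{n-1}{m}$ (the identity already used for $\chi$, provable by induction on $m$) together with Pascal's rule yields
\[
h_t=f_t+Z-W-\binom{n-1}{t-1}\quad(\text{linear case}),\qquad\qquad h_t=f_t+Z-\binom{n-1}{t-1}\quad(\text{bounded case}),
\]
where $\binom{n-1}{t-1}=\tfrac tn\binom nt$.

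Next I estimate the three quantities. The number of facets satisfies $f_t\sim\mathrm{Bin}\big(\binom nt,p\big)$, so $f_t=p\binom nt+o\big(\tfrac1n\binom nt\big)$ with high probability by a Chernoff bound. Linearity gives $\expect(Z)=\binom{n}{t-1}(1-p)^{\,n-t+1}$ and $\expect(W)=\binom{n}{t-2}(1-p)^{\binom{n-t+2}{2}}$. For concentration of $Z=\sum_xZ_x$ apply Lemma~\ref{covariance}: $Z_x$ and $Z_y$ are independent unless $x$ and $y$ lie in a common $t$-set, i.e.\ unless $|x\cap y|=t-2$, which happens for only $O(n^2)$ partners $y$, and then $\mathrm{Cov}(Z_x,Z_y)\le\prob(Z_x=1)=(1-p)^{\,n-t+1}$; hence $\mathrm{Var}(Z)=O(n^2)\expect(Z)=o(\expect(Z)^2)$ over the stated $p$-ranges, so $Z=(1+o(1))\expect(Z)$ with high probability. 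In the linear case $\expect(W)=o(\binom nt)$ in the same ranges, so a single use of Markov's inequality gives $0\le W=o(\binom nt)$ with high probability; this is the only use of $W$, and it only helps the bound below.

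Now the sign. In the linear case Proposition~\ref{constcbinom} gives $\binom{n}{t-1}=(1+o(1))\tfrac{c}{1-c}\binom nt$, so with high probability
\[
f_t+Z=(1+o(1))\Big(p+\tfrac{c}{1-c}(1-p)^{\,n-t+1}\Big)\binom nt .
\]
The hypothesis on $p$ keeps $p+\tfrac{c}{1-c}(1-p)^{\,n-t+1}$ bounded above by a constant strictly smaller than $\tfrac tn=c$ (the only non-trivial range being $p=\Theta(1/n)$, where this follows from $p(1-c)n\ge\tfrac{2\alpha(c)+\epsilon}{2(1-c)}>-\log(1-c)$, the last step being the elementary inequality $\tfrac{\alpha(c)}{1-c}\ge-\log(1-c)$, i.e.\ $-c\log c\ge0$). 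Hence $f_t+Z<(c-c')\binom nt<\binom{n-1}{t-1}$ for some constant $c'>0$, and since $W\ge0$ we conclude $h_t<0$ with high probability. Conversely, once $p$ exceeds $\tfrac tn$ the $f_t$-term dominates $\binom{n-1}{t-1}=\tfrac tn\binom nt$ (and $\expect(W)\to0$), so the same computation gives $h_t>0$. In the bounded case $np\to0$ throughout the stated range, so $(1-p)^{\,n-t+1}=1-(1+o(1))np$ and a short computation gives $\expect(f_t)+\expect(Z)-\binom{n-1}{t-1}=-(1+o(1))(t-1)p\binom nt$; the lower bound $p>(1+\omega(n))\tfrac{2(t-2)\log n}{n^2}$ makes this negative (of order $-p\,n^{t-2}$) by a margin comfortably exceeding the fluctuations of $f_t$ and $Z$, so again $h_t<0$ with high probability, and $h_t>0$ once $p$ exceeds $\tfrac tn$.

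The delicate point is the lower end of the linear range, $p=\Theta(1/n)$: there $Z\asymp\binom{n}{t-1}(1-p)^{\,n-t+1}$ is itself a \emph{constant} multiple of $\binom nt$ — the same order as the main term $(\tfrac tn-p)\binom nt$ — rather than lower order, so one genuinely needs the sharp asymptotics of Proposition~\ref{constcbinom} for $\expect(Z)$, honest concentration of $Z$ via Lemma~\ref{covariance}, and the entropy inequality that makes the hypothesis on $p$ quantitatively strong enough to win; an analogous but milder tightness issue at $t=3$ in the bounded case is why the lower bound there carries the $\log n$ factor. Everything else is routine linearity of expectation and Chebyshev, mirroring the arguments already given for Theorems~\ref{skeletonpure} and~\ref{pure shellable}.
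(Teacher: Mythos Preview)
Your approach matches the paper's: estimate each $f_i$, invoke the skeleton theorems for the low layers, concentrate $f_t$ and the number of missing $(t-1)$-sets, and collapse the alternating binomial sum via the identity $\sum_{i=0}^m(-1)^i\binom ni=(-1)^m\binom{n-1}{m}$. In the linear case you are in fact more careful than the paper, which simply asserts $f_{t-1}=(1+o(1))\binom n{t-1}$ via Markov and $f_i=\binom ni$ for all $i\le t-2$ via Theorem~\ref{skeletonpure}; at the lower endpoint $p=\Theta(1/n)$ neither of these is quite right (the $(t-2)$-skeleton threshold from Theorem~\ref{skeletonpure} is roughly $2\alpha(c)/((1-c)^2n)$, about twice the stated lower bound on $p$, and $\expect(Z)$ is a constant fraction of $\binom n{t-1}$ there), so your separate handling of $W$, your second-moment concentration of $Z$, and the entropy inequality $\alpha(c)/(1-c)>-\log(1-c)$ genuinely fill a gap the paper glosses over.

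There is one slip in your bounded case: you assert that $np\to0$ throughout the stated range, but in fact $np$ can be anything in $(0,t-\epsilon)$. Consequently the expansion $(1-p)^{n-t+1}=1-(1+o(1))np$ and the resulting formula $\expect(f_t)+\expect(Z)-\binom{n-1}{t-1}=-(1+o(1))(t-1)p\binom nt$ are invalid near the upper end; with $x=np$ the correct leading term is $\tfrac1n\binom nt\,(x-t(1-e^{-x}))$, whose sign on $(0,t-\epsilon)$ must be checked directly. The paper's proof has essentially the same blind spot in a different guise: its Markov step for $f_{t-1}$ needs $(1-p)^{n-t+1}\to0$, which again fails when $np=\Theta(1)$.
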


\begin{proof}
We will treat both cases together and then use the lower bounds on $p$ to conclude using Propositions \ref{skeletonpure} and \ref{skeletonhypergraph}.

Firstly, we note that $f_{t}$ is binomially distributed, with parameters $\binom{n}{t}$ and $p$. Thus $\expect(f_{t})=p\binom{n}{t}$ and $\mathrm{Var}(f_{t})=p(1-p)\binom{n}{t}=o(\expect(f_{t})^2)$, so Chebyshev's Inequality allows us to conclude that with high probability, $f_{t}=(1+o(1))p\binom{n}{t}$.

For sets $x\in [n]^{(t-1)}$, we write $X_x$ for the indicator random variable that is 1 if $x$ is not a face of $\Delta$ and let $X=\sum_x X_x$. Clearly, $\expect(X)=\binom{n}{t-1}(1-p)^{n-t+1}$. Since $f_{t-1}=\binom{n}{t-1}-X$, we may use Markov's Inequality to show that with high probability, $f_{t-1}=(1+o(1))\binom{n}{t-1}$.

By  Propositions \ref{skeletonpure} and \ref{skeletonhypergraph}, with high probability, $f_{i}=\binom{n}{i}$ for all $i\leq t-2$. Thus with high probability,

\[h_{t}=\sum_{i=0}^{t-1} (-1)^{t-i} \binom{n+1}{i} +(1+o(1))p\binom{n}{t} +o\left(\binom{n}{t-1}\right). \]

Since $\sum_{i=0}^j (-1)^{j-i}\binom{n}{i}=-\frac{j+1}{n}\binom{n}{j+1}$, as seen by induction on $j$, the above simplifies to $h_t=  ((1+o(1))p- \frac{t}{n})\binom{n}{t}+ o\left(\binom{n}{t-1}\right) $, which concludes the proof.

\end{proof}
Since the $h$-vector is positive for $p >\frac{t-\epsilon}{n}$ it is natural to ask for which range of $p$  the random uniform complex is also Cohen-Macaulay.

\section{Evasiveness}\label{evasivenesssec}
In this section  we will use our results about $\mathcal{U}(n)$ to prove a result about the complexity of Boolean functions, thus reconnecting with Korshunov's original formulation of his theorem.

A \emph{Boolean function} is a function $f:\{0,1\}^n\to \{0,1\}$. Informally, the \emph{decision tree complexity of $f$}, written $D(f)$, is the minimum number of bits of input an adaptive algorithm must know in order to determine the output. More precisely, the algorithm initially has no information about the input, $x=(x_1, \dots, x_n)$, and at each stage asks a question of the form `what is $x_i$?', where its choice of question may depend on previously received answers. The algorithm terminates when it knows the value of $f(x)$. We say $D(f)\leq t$, if there is some algorithm that terminates after at most $t$ questions for all inputs $x$. Trivially, $D(f)\leq n$. We say $f$ is \emph{evasive} if $D(f)=n$. Trivially, the projection functions,  $p_i(x_1,\dots, x_n)=x_i$, are not evasive (for $n>1$), while the layer functions, $l_i(x_1,\dots, x_n)=\sum x_i$, are evasive.

In 1976, Rivest and Vuillemin \cite{rivestvuillemin} showed that almost all Boolean functions are evasive, in the sense that the proportion of functions on $n$ variables that are evasive tends to 1, as $n\to \infty$. 

A symmetry of a Boolean function $f$ is a permutation $\phi:[n]\to [n]$ such that $f(x_1,\dots, x_n)=f(x_{\phi(1)}, \dots x_{\phi(n)})$, for all $(x_1,\dots, x_n)$. We say $f$ is \emph{transitive} if it has a transitive group of symmetries, that is, for all $i$ and $j$, there is a symmetry $\phi$ such that $\phi(i)=j$.

A Boolean function, $f$, is \emph{monotone} if $f(x_1,\dots,x_n)\leq f(y_1,\dots,y_n),$ whenever $x_i\leq y_i$ for all $i$. For $A\subseteq [n]$, let $x_A$ denote the Boolean string whose $i^{th}$ co-ordinate is 1 if and only if $i\in A$. We write $\Delta_f$ for the subset of $\mathcal{P}[n]$ that consists of all non-empty subsets $A$ such that $f(x_A)=0$. If $f$ is a monotone function, $\Delta_f$ is a simplicial complex. Note that every simplicial complex may be associated with a monotone Boolean function in this way. The usefulness of this viewpoint when studying evasiveness is demonstrated by the following lemma.

\begin{lemma}[Kahn-Saks-Sturtevant \cite{kahnetal}]
	If $f$ is a non-constant monotone Boolean function that is not evasive, then $\Delta_f$ is contractible.
\end{lemma}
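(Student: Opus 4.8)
The plan is to run an induction on the number of variables $n$, guided by an optimal decision tree, building contractibility of $\Delta_f$ out of contractibility of a link and a deletion. First I would record the dictionary between restrictions of $f$ and vertex operations on $\Delta_f$: fixing a coordinate $x_i=0$ gives a monotone function $f|_{x_i=0}$ on $[n]\setminus\{i\}$ with $\Delta_{f|_{x_i=0}}=\mathrm{del}_i(\Delta_f):=\{A\in\Delta_f: i\notin A\}$, while fixing $x_i=1$ gives $f|_{x_i=1}$ with $\Delta_{f|_{x_i=1}}=Lk(\{i\})$. Since $f$ is non-constant and monotone, $f(\mathbf 0)=0$, hence $f|_{x_i=0}\not\equiv 1$; this suggests inducting on the robust statement: \emph{if $g$ is monotone, non-evasive and $g\not\equiv 1$, then $\Delta_g$ is nonempty and contractible} (this also covers $g\equiv 0$, where $\Delta_g$ is a full simplex).

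The second ingredient is that non-evasiveness descends to restrictions. From the standard recursion $D(f)=\min_i\big(1+\max(D(f|_{x_i=0}),D(f|_{x_i=1}))\big)$ for minimum decision-tree depth, $D(f)\le n-1$ forces some coordinate $i$ with $D(f|_{x_i=0})\le n-2$ and $D(f|_{x_i=1})\le n-2$, i.e. both restrictions non-evasive on $n-1$ variables. Fix such an $i$ and write $L:=Lk(\{i\})$.

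For the inductive step ($n\ge 2$; the case $n=1$ is immediate, as there the only non-evasive functions are constant), distinguish two cases. If $f|_{x_i=1}\equiv 1$, then no face of $\Delta_f$ contains $i$, so $\Delta_f=\mathrm{del}_i(\Delta_f)=\Delta_{f|_{x_i=0}}$, contractible by the inductive hypothesis. Otherwise $f|_{x_i=1}\not\equiv 1$, so by the inductive hypothesis both $\mathrm{del}_i(\Delta_f)=\Delta_{f|_{x_i=0}}$ and $L=\Delta_{f|_{x_i=1}}$ are nonempty and contractible. The closed star $\mathrm{star}_i(\Delta_f)=\{i\}*L$ is a cone over a nonempty complex, hence contractible, and $\Delta_f=\mathrm{del}_i(\Delta_f)\cup\mathrm{star}_i(\Delta_f)$ with $\mathrm{del}_i(\Delta_f)\cap\mathrm{star}_i(\Delta_f)=L$. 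Gluing two contractible complexes along a contractible subcomplex is contractible: the inclusion $L\hookrightarrow\Delta_f$ of a subcomplex is a cofibration, so $\Delta_f\simeq\Delta_f/L$, and collapsing $L$ in each piece gives $\Delta_f/L\simeq\big(\mathrm{del}_i(\Delta_f)/L\big)\vee\big(\mathrm{star}_i(\Delta_f)/L\big)$, a wedge of two contractible spaces.

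The point I would be most careful about is the bookkeeping around constant restrictions: a restriction may be $\equiv 0$ (harmless, a full simplex, covered by the inductive hypothesis) or $\equiv 1$ (the empty complex, which must be routed through the first case and never fed into the gluing step), and the inductive statement must be phrased so these branches close up; this, rather than the homotopy-theoretic gluing, is the real obstacle. An alternative closer to the original argument is to strengthen the conclusion to \emph{collapsibility}: if $L$ collapses to a point then $\{i\}*L$ collapses onto $\mathrm{del}_i(\Delta_f)$ inside $\Delta_f$, so $\Delta_f$ collapses to $\mathrm{del}_i(\Delta_f)$, which collapses to a point by induction, and collapsible complexes are contractible.
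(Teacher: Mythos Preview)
The paper does not give its own proof of this lemma; it is quoted from Kahn--Saks--Sturtevant \cite{kahnetal} and used as a black box. Your argument is correct and is essentially the original one: Kahn, Saks and Sturtevant run exactly this decision-tree induction, proving the stronger statement that $\Delta_f$ is collapsible (your final paragraph), from which contractibility follows; your alternative via gluing two contractible subcomplexes along a contractible intersection is an equally valid route to contractibility alone.
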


Much of the work in the area of evasiveness is concerns the following conjecture, which is a generalised form of a conjecture by Karp.
\begin{conjecture}[Evasiveness Conjecture]
	Every non-constant monotone, transitive Boolean function is evasive.
\end{conjecture}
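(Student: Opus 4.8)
The plan is to follow the topological route opened by Kahn, Saks and Sturtevant, combining the lemma quoted just above with fixed-point theorems for finite group actions, and then to isolate the step that keeps the full statement out of reach. Suppose, for contradiction, that $f\colon\{0,1\}^n\to\{0,1\}$ is non-constant, monotone, transitive and \emph{not} evasive. By the Kahn--Saks--Sturtevant lemma, the associated complex $\Delta_f$ is contractible, so in particular $|\Delta_f|$ is non-empty and $\mathbb{Z}/p$-acyclic for every prime $p$. Let $\Gamma\le S_n$ be the symmetry group of $f$; the hypothesis that $f$ is transitive means $\Gamma$ acts transitively on the vertex set $[n]$, and this permutation action extends to a simplicial action of $\Gamma$ on $\Delta_f$, hence to a continuous action on $|\Delta_f|$ and on the realization of its barycentric subdivision.

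The first thing to record is that transitivity rules out non-trivial invariant simplices: if $\sigma\in\Delta_f$ is fixed setwise by all of $\Gamma$, then $\sigma$ is a union of $\Gamma$-orbits on $[n]$, so $\sigma=\emptyset$ (impossible, since $\emptyset\notin\Delta_f$) or $\sigma=[n]$; the latter gives $f(x_{[n]})=0$, and monotonicity then forces $f\equiv 0$, contradicting non-constancy. Equivalently, the $\Gamma$-action on $|\Delta_f|$ has no fixed point, because a fixed point would lie in the relative interior of a setwise-invariant simplex. So the whole problem reduces to the topological assertion: \emph{a finite group acting simplicially and transitively on the vertices of a non-empty contractible complex must have a fixed point on its geometric realization.}

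The second stage is to attack this with Smith theory and its refinements. When $\Gamma$ is a $p$-group, Smith theory already shows $|\Delta_f|^{\Gamma}$ is $\mathbb{Z}/p$-acyclic, hence non-empty, and the contradiction of the previous paragraph follows; more generally Oliver's theorem identifies exactly the finite groups $G$ for which every action on a contractible complex has a fixed point, and for such $\Gamma$ we are again done. The case $n=p^k$ a prime power --- already settled in \cite{kahnetal} --- is handled by passing to a suitable transitive subgroup and iterating Smith-theoretic reductions: one repeatedly replaces the complex by the fixed-point subcomplex of a $p$-subgroup, which remains $\mathbb{Z}/p$-acyclic and carries an action of the relevant quotient group that is still transitive on a prime-power number of points, until an invariant simplex, and hence the contradiction above, appears. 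I would first recast this prime-power case in the language of $\Delta_f$ as a warm-up, since it already exhibits the shape of the argument.

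The main obstacle --- and the reason this is stated here as a conjecture rather than a theorem --- is the passage from prime-power $n$, or from $\Gamma$ lying in Oliver's family, to an \emph{arbitrary} transitive $\Gamma$. Not every transitive permutation group has Oliver's fixed-point property, so one cannot conclude in general that contractibility of $|\Delta_f|$ is incompatible with the absence of $\Gamma$-invariant simplices; indeed even the graph-property case $n=\binom{m}{2}$, $\Gamma=S_m$, is open for general $m$, and the generic evasiveness proved earlier in this paper does not help, since the transitive functions form a measure-zero, highly structured family with no product structure to exploit. Closing this gap seems to require either a new fixed-point principle using the special nature of $\Delta_f$ (that it is the complex of a monotone Boolean function, not an arbitrary $\Gamma$-complex) or a route that bypasses the Kahn--Saks--Sturtevant reduction altogether; producing such an ingredient is precisely the heart of the conjecture, and is where I expect essentially all of the difficulty to lie.
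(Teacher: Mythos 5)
You have not proved the statement, and you say so yourself --- which is the correct outcome, because this statement is the Evasiveness (generalized Karp) Conjecture, and the paper states it precisely as a \emph{conjecture} with no proof. There is therefore no ``paper proof'' to compare against. What the paper actually proves in this section is the much weaker generic statement (the Corollary immediately following): almost all monotone Boolean functions are evasive, obtained by combining the Kahn--Saks--Sturtevant lemma with the non-vanishing homology of Theorem \ref{uniformhomology} and Observation \ref{oddhomology} (or, alternatively, with the Euler-characteristic anti-concentration of Corollary \ref{anticonc}). You correctly observe that this generic result gives no leverage on the conjecture itself, since transitive functions form a vanishing, highly structured subfamily.

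As a survey of the known topological attack your write-up is essentially accurate: the reduction of transitivity to ``no $\Gamma$-invariant simplex'' is sound (a setwise-invariant face must be a union of vertex-orbits, hence $[n]$, forcing $f\equiv 0$), Smith theory handles $p$-group actions, Oliver's classification handles the groups with the fixed-point property, and the prime-power case of $n$ is the content of Kahn--Saks--Sturtevant. The genuine gap you name --- that an arbitrary transitive subgroup of $S_n$ need not lie in Oliver's class, so contractibility of $\Delta_f$ cannot in general be contradicted by fixed-point theory alone --- is exactly why the statement remains open (even for graph properties, $n=\binom{m}{2}$). So your proposal is a reasonable research plan with an honestly identified missing ingredient, not a proof; nothing in the paper closes that gap either, nor claims to.
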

If the condition of transitivity is relaxed, the function need not be evasive, as demonstrated by the projection functions as shown above. However, the following easy corollary of our work on the uniformly random simplicial complex shows that this is the exception rather than the rule.
\begin{corollary}
	Almost all monotone Boolean functions are evasive.
\end{corollary}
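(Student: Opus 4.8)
The plan is to transfer the statement entirely to the uniform random simplicial complex and then invoke the anti-concentration of the Euler characteristic established in Corollary \ref{anticonc}. Recall that $f \mapsto \Delta_f$ is a bijection between monotone Boolean functions on $n$ variables and simplicial complexes on $[n]$, and that under this bijection the counting measure on monotone functions corresponds to the uniform measure $\mathcal{U}(n)$. Thus ``almost all monotone Boolean functions are evasive'' is exactly the assertion that, for $\Delta\sim\mathcal{U}(n)$, the associated monotone function $f_\Delta$ (the one with $\Delta_{f_\Delta}=\Delta$) is evasive with high probability.

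First I would invoke the contrapositive of the Kahn--Saks--Sturtevant lemma: if $\Delta$ is not contractible, then either $f_\Delta$ is constant or $f_\Delta$ is evasive. There are only two constant Boolean functions, corresponding to the full simplex and the empty complex, so they form a vanishingly small fraction of $\mathcal{M}(n)$ and can be discarded by a trivial counting bound. It therefore suffices to prove that $\Delta\sim\mathcal{U}(n)$ is not contractible with high probability.

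Next, a contractible complex is homotopy equivalent to a point, hence has Euler characteristic $1$; equivalently $\sum_i(-1)^i f_i=1$, which is a purely combinatorial condition independent of the coefficient ring. Applying Corollary \ref{anticonc} with the constant function $k(n)\equiv 1$ gives $\chi(\Delta)\neq 1$ with high probability, so $\Delta$ is not contractible with high probability. A union bound with the negligible set of $\Delta$ for which $f_\Delta$ is constant then yields that $f_\Delta$ is evasive with high probability, which is the claim. The same argument goes through for odd $n$ since Corollary \ref{anticonc} is stated for all $n$ via Theorem \ref{korshunov}.

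Since essentially all the work is already carried out in Theorem \ref{anticorrel} (through Korshunov's structure theorem \ref{korshunov}) and packaged in Corollary \ref{anticonc}, there is no serious obstacle left in this last step; the only points that require a little care are verifying that the bijection between monotone functions and complexes carries the uniform measure to $\mathcal{U}(n)$, and explicitly excluding the two constant functions so that the Kahn--Saks--Sturtevant lemma may be applied. I would also add the remark that the argument in fact shows more: non-evasiveness fails for the very robust reason that $\chi(\Delta)\neq 1$, rather than for any delicate topological reason, so a typical monotone Boolean function is far from being of any fixed simple type.
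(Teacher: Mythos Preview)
Your proof is correct. It differs from the paper's primary argument only in the choice of topological invariant used to rule out contractibility: the paper appeals to Theorem \ref{uniformhomology} and Observation \ref{oddhomology} to conclude that $H_{\lfloor n/2\rfloor-1}(\Delta,\mathbb{F})$ is non-trivial with high probability, whereas you invoke Corollary \ref{anticonc} to conclude that $\chi(\Delta)\neq 1$ with high probability. The paper itself notes, immediately after its proof, that the result ``can also be derived from our bounds for the Euler characteristic of $\mathcal{U}(n)$'' --- which is precisely the route you take. Your version has the minor advantage of handling even and odd $n$ uniformly through a single corollary, and of making explicit the disposal of the two constant functions; the paper's version yields the slightly stronger statement that a specific homology group is in fact large, not merely that some Betti number is nonzero.
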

\begin{proof}
	It is sufficient to show that almost all simplicial complexes are not contractible. Since contractible complexes have trivial homology group, this follows from Theorem \ref{uniformhomology} and Observation \ref{oddhomology}.
\end{proof}
We note that that this result can also be derived from our bounds for the Euler characteristic of $\mathcal{U}(n)$.

\section*{Acknowledgments}
This first author was supported by The Swedish Research Council grant 2014-4897.  This work was done during second author's visit at Ume\aa \ University in 2015 supported by the Eileen Colya prize.

\bibliographystyle{plain}

\end{document}